\colorlet{darkishRed}{red!80!black}
\colorlet{darkishBlue}{blue!60!black}
\colorlet{darkishGreen}{green!60!black}
\newenvironment{customthm}[1]
  {\innercustomthm}
  {\endinnercustomthm}
\newenvironment{customlem}[1]
  {\innercustomlem}
  {\endinnercustomlem}
\newtheorem{theorem}{Theorem}[section]
\newtheorem{mainresult}{Theorem}
\newtheorem{mainlemma}[mainresult]{Lemma}
\newtheorem{proposition}[theorem]{Proposition}
\newtheorem{lemma}[theorem]{Lemma}
\theoremstyle{definition}
\colorlet{darkishGreen}{green!60!black}
\newcommand{\cB}{\mathcal{B}}
\newcommand{\cX}{\mathcal{X}}
\newcommand{\cU}{\mathcal{U}}
\newcommand{\cA}{\mathcal{A}}
\newcommand{\N}{\mathbb{N}}
\newcommand{\nao}[1][]{%
\ifthenelse{\equal{#1}{}}{\trianglelefteq_T}{\trianglelefteq_{T_{#1}}\!}%
}
\newcommand{\dc}[1]{\lceil #1\rceil}
\newcommand{\uc}[1]{\lfloor #1\rfloor}
\newcommand{\dS}{\mathstrut\mkern2.5mu S\mkern-13mu\raise1.4ex%
\hbox{$\leftrightarrow$}}
\def\lowfwd #1#2#3{{\mathop{\kern0pt #1}\limits^{\kern#2pt\raise.#3ex
\vbox to 0pt{\hbox{$\scriptscriptstyle\rightarrow$}\vss}}}}
\def\lowbkwd #1#2#3{{\mathop{\kern0pt #1}\limits^{\kern#2pt\raise.#3ex
\vbox to 0pt{\hbox{$\scriptscriptstyle\leftarrow$}\vss}}}}
\def\ve{\kern-1.5pt\lowfwd e{1.5}2\kern-1pt}
\def\ev{\kern-1pt\lowbkwd e{0.5}2\kern-1pt}
\def\vf{\kern-2pt\lowfwd f{2.5}2\kern-1pt}
\begin{document}

\title[Ends of digraphs]{Ends of digraphs I: basic theory} 
\author{Carl B\"{u}rger}
\author{Ruben Melcher}
\address{University of Hamburg, Department of Mathematics, Bundesstraße 55 (Geomatikum), 20146 Hamburg, Germany}
\email{carl.buerger@uni-hamburg.de, ruben.melcher@uni-hamburg.de}

\keywords{infinite digraph; end; direction; limit edge; rank}
\subjclass[2010]{05C20, 05C38, 05C63, 05C75, 05C78}

\begin{abstract}
In a series of three papers we develop an end space theory for directed graphs. 
As for undirected graphs, the \emph{ends} of a digraph are points at infinity to which its rays converge. Unlike for undirected graphs, some ends are joined by  \emph{limit edges}; these are crucial for obtaining the end space of a digraph as a natural (inverse) limit of its finite contraction minors.

As our main result in this first paper of our series we show that the notion of \emph{directions} of an undirected graph, a tangle-like description of its ends, extends to digraphs: there is a one-to-one correspondence between the `directions' of a digraph and its ends and limit edges.

In the course of this we extend to digraphs a number of fundamental tools and techniques for the study of ends of graphs, such as the star-comb lemma and Schmidt's ranking of rayless graphs. \end{abstract}

\maketitle

\section{Introduction}

\subsection*{The series} 
Ends of graphs are one of the most important concepts in infinite graph theory. They can be thought of as points at infinity to which its rays converge. Formally, an \emph{end} of a graph $G$ is an equivalence class of its rays, where two rays are equivalent if for every finite vertex set $X\subseteq V(G)$ they have a tail in the same component of $G-X$. For example, infinite complete graphs or grids have one end, while the binary tree has continuum many ends, one for every rooted ray~\cite{DiestelBook5}. 
The concept of ends was introduced in 1931 by Freudenthal~\cite{Freudenthal1931Enden}, who defined ends for certain topological spaces. In 1964, Halin~\cite{Halin_Enden64} introduced ends for infinite undirected graphs, taking his cue directly from Carath\'eodory's \emph{Primenden} of regions in the complex plane~\cite{Caratheodory1913}. 

There is a natural topology on the set of ends of a graph $G$, which makes it into the \emph{end space $\Omega(G)$}. Polat~\cite{polat1996ends,polat1996ends2}  studied the topological properties of this space. Diestel and K\"uhn  \cite{CyclesI} extended this topological space to the space $\vert G \vert$ formed by the graph $G$ together with its ends. Many well known theorems of finite graph theory extend to this space $\vert G \vert$, while they do not generalise verbatim to infinite graphs. 
Examples include Nash-William's tree-packing theorem~\cite{DiestelBook3}, Fleischner's Hamiltonicity theorem \cite{AgelosFleischner}, and Whitney's planarity criterion~\cite{duality}. In the formulation of these theorems, topological arcs and circles take the role of paths and cycles, respectively.

For directed graphs, a similarly useful notion and theory of ends has never been found. There have been a few attempts, most notably by Zuther~\cite{Zuther96Thesis}, but not with very encouraging results. 
In this series we propose a new notion of ends of digraphs  and develop a corresponding theory of their end spaces. Let us give a brief overview of the series. 

In this first paper we lay the foundation for the whole series by extending to digraphs a number of techniques that are important in the study of ends of graphs. 

As our main result we show that the one-to-one correspondence between the directions and the ends of a graph has an analogue for digraphs. A \emph{direction} of a graph $G$ is a map $f$, with domain the set of finite vertex sets $X$ of $G$, that maps every such $X$ to a component of $G-X$ so that $f(X)\supseteq f(Y)$ whenever $X\subseteq Y$. Every end $\omega$ of $G$  naturally defines a direction $f_\omega$ which maps every finite vertex set $X \subseteq V(G)$ to the unique component of $G-X$ in which every ray representing $\omega$ has a tail. It is straightforward to show that $f_\omega$ is indeed a direction of $G$. Conversely, Diestel and K\"uhn~\cite{Diestel_Kuehn_Directions_03} proved that for every direction $f$ of $G$ there is a (unique) end $\omega$ of $G$ that defines $f$ in that $f_\omega=f$. This correspondence is now well known and has become a standard tool in the study of infinite graphs. See \cite{StarComb1StarsAndCombs,diestel2006end,EndsAndTangles,ApproximatingNormalTrees,EndsTanglesCrit,StoneCechTangles} for examples.

For a digraph $D$ we will adapt the definition of a direction by first replacing every occurrence of the word `component' with `strong component'. These directions of  $D$ will correspond bijectively to the ends of $D$. However, as there may be edges between distinct strong components of  $D$, there will be another type of direction: one that maps finite vertex sets $X\subseteq V(D)$ to the set of edges between two distinct strong components of $D-X$ in a compatible way. These latter directions of digraphs will correspond bijectively to its \emph{limit edges}---additional edges between distinct ends, or between ends and vertices, of a digraph. 

In the course of proving that the ends and limit edges of a digraph correspond to its two types of directions in this way, we extend to digraphs a number of fundamental tools and techniques for ends of graphs, such as the star-comb lemma \cite[Lemma~8.2.2]{DiestelBook5} and Schmidt's ranking of rayless graphs \cite{Schmidt1983}. 

In the second paper we will define a topology on the space $|D|$ formed by the digraph $D$ together with its ends and limit edges. To illustrate the typical use of this space $|D|$, we extend to it two statements about finite digraphs that do not generalise verbatim to infinite digraphs. The first statement is the characterisation of Eulerian digraphs by the condition that the in-degree of every vertex equals its out-degree. The second statement is the characterisation of strongly connected digraphs by the existence of a closed Hamilton walk, see \cite{bang2008digraphs}. In the course of our proofs we extend to the space $|D|$ a number of techniques that have become standard in proofs of statements about $\vert G \vert$, such as the jumping arc lemma or the fact that $|G|$ is an inverse limit of finite contraction minors of $G$.

In the third paper we consider normal spanning trees, one of the most important structural tools in infinite graph theory. Here a rooted tree $T\subseteq G$  is \emph{normal} in $G$ if the endvertices of every $T$-path in $G$ are comparable in the tree-order of $T$. (A~\emph{$T$-path} in $G$ is a non-trivial path that meets $T$ exactly in its endvertices.) In finite graphs, normal spanning trees are precisely the depth-first search trees \cite{DiestelBook5}.

As a directed analogue of normal spanning trees we introduce and study \emph{normal spanning arborescences} of digraphs. These are generalisations of depth-first search trees  to infinite digraphs, which promise to be as powerful for a structural analysis of digraphs as normal spanning trees are for graphs. We show that normal spanning arborescences capture the structure of the set of ends of the digraphs they span, both combinatorially and topologically.
Furthermore, we provide a Jung-type~\cite{jung69} criterion for the existence of normal spanning arborescences in digraphs.

\newpage
\subsection*{This paper.} In order to state the main results of this first paper of our series more formally, we need a few definitions.

A \emph{directed ray} is an infinite directed path that has a first vertex (but no last vertex). The directed subrays of a directed ray are its \emph{tails}. For the sake of readability we shall omit the word `directed' in  `directed path' and `directed ray' if there is no danger of confusion. We call a ray in a digraph $D$ \emph{solid} in $D$ if it has a tail in some strong component of $D-X$ for every finite vertex set $X\subseteq V(D)$. We call two solid rays in a digraph $D$ \emph{equivalent} if for every finite vertex set $X\subseteq V(D)$ they have a tail in the same strong component of $D-X$. The classes of this equivalence relation are the \emph{ends} of $D$. 
The set of ends of $D$ is denoted by $\Omega(D)$. In the second paper of this series we will equip $\Omega(D)$ with a topology and we will call $\Omega(D)$ together with this topology \emph{the end space of $D$}. Note that two solid rays $R$ and $R'$ in $D$ represent the same end if and only if $D$ contains infinitely many disjoint paths from $R$ to $R'$ and infinitely many disjoint paths from $R'$ to $R$.

For example, the digraph $D$ in Figure~\ref{fig:ladder} has two ends, which are shown as small dots on the right. Both the upper ray $R$ and the lower ray $R'$ are solid in $D$ because the vertex set of any tail of $R$ or $R'$ is strongly connected in $D$. Deleting finitely many vertices of $D$ always results in precisely two infinite strong components (and finitely many finite strong components) spanned by the vertex sets of tails of~$R$~or~$R'$.

\begin{figure}[h]
\centering
\def\svgwidth{0.8\columnwidth}
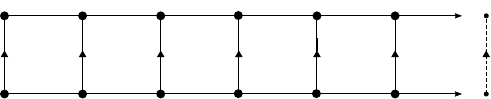
\caption{A digraph with two ends (depicted as small dots) linked by a limit edge (depicted as a dashed line). Every undirected edge in the figure represents a pair of inversely directed edges.}
\label{fig:ladder}
\end{figure}

Similarly to ends of graphs, the ends $\omega$ of a digraph can be thought of as points at infinity to which the rays that represent $\omega$ converge. We will make this formal in the second paper of our series, but roughly one can think of this as follows. For a finite vertex set $X\subseteq V(D)$ and an end $\omega\in\Omega(D)$ we write $C(X,\omega)$ for the unique strong component of $D-X$ that contains a tail of every ray that represents $\omega$; the end $\omega$ is then said to \emph{live} in that strong component.  In our topological space the strong components of the form $C(X,\omega)$ together with all the ends that live in them will essentially form the basic open neighbourhoods around $\omega$.

Given an infinite vertex set $U\subseteq V(D)$, we say that an end $\omega$ is \emph{in the closure} of $U$ in $D$ if $C(X,\omega)$ meets $U$ for every finite vertex set $X\subseteq V(D)$. (It will turn out that an end is in the closure of $U$ in $D$ if and only if it is in the topological closure~of~$U$.)

For undirected graphs $G$ one often needs to know whether an end $\omega$ is in the closure of a given vertex set $U$, i.e., whether $U$ meets $C(X,\omega)$ for every finite vertex set $X\subseteq V(G)$. This is equivalent to $G$ containing a comb with all its teeth in $U$. Recall that a \emph{comb} is the union of a ray $R$ (the comb's \emph{spine}) with infinitely many disjoint finite paths, possibly trivial, that have precisely their first vertex on~$R$. The last vertices of those paths are the \emph{teeth} of this comb. A standard tool in this context is the star-comb lemma~\cite[Lemma~8.2.2]{DiestelBook5} which states that a connected graph contains for a given set $U$ of vertices either a comb with all its teeth in $U$ or an infinite subdivided star with all its leaves in $U$. In this paper we will prove a directed version of the star-comb lemma.

Call two statements $A$ and $B$ \emph{complementary} if the negation of $A$ is equivalent to~$B$.  For a graph $G$, the statement that $G$ has an end in the closure of $U\subseteq V(G)$ is complementary to the statement that $G$ has a $U$-rank, see~\cite{StarComb1StarsAndCombs}.
For $U=V(G)$,  the $U$-rank is known as Schmidt's ranking of rayless graphs~\cite{DiestelBook5, Schmidt1983}. It is a standard technique to prove statements about rayless graphs by transfinite induction on Schmidt's rank. For example Bruhn, Diestel, Georgakopoulos, and Spr\"{u}ssel \cite{UnfriendlyPartition} employed this technique to prove the unfriendly partition conjecture for countable rayless graphs.

The directed analogue of a comb with all its teeth in $U$ will be a `necklace' attached to $U$. The \emph{symmetric ray} is the digraph obtained from an undirected ray by replacing each of its edges by its two orientations as separate directed edges. A \emph{necklace} is an inflated symmetric ray with finite branch sets. (An inflated $H$ is obtained from a digraph $H$ by subdividing some edges of $H$ finitely often and then replacing the `old' vertices by strongly connected digraphs. The \emph{branch sets} of the inflated $H$ are these  strongly connected digraphs.  See Section~\ref{section: preliminaries} for the formal definition of inflated, and of branch sets.) Figure~\ref{fig: necklace} shows an example of a necklace. 
\begin{figure}[h]
\centering 
\def\svgwidth{0.8\columnwidth}
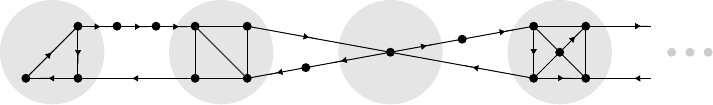
\caption{A necklace up to the fourth branch set. Every undirected edge in the figure represents a pair of inversely directed edges.}

\label{fig: necklace}
\end{figure} 
Given a set $U$ of vertices in a digraph $D$, a necklace $N\subseteq D$ is \emph{attached to} $U$ if infinitely many of the branch sets of $N$ contain a vertex  from $U$. We will see that the statement that  $D$ has an end in the closure of $U$ is equivalent to the statement that $D$ contains a necklace attached to $U$ as a subdigraph.

We extend Schmidt's result that a graph is rayless if and only if it has a rank. See Section~\ref{section: ends} for the definition of `$U$-rank' in digraphs.  
\begin{customlem}{\ref{lemma: necklace lemma}}[Necklace Lemma]
Let $D$ be any digraph and $U$ any set of vertices in~$D$. Then the following statements are complementary: \begin{enumerate}
    \item $D$ has a necklace attached to $U$\!; 
    \item $D$ has a $U$-rank. 
\end{enumerate}
\end{customlem}

Let us now define a directed analogue of the directions of undirected infinite graphs. Consider any digraph $D$, and write $\cX(D)$ for the set of finite vertex sets in $D$. A \emph{\emph{(}vertex-\emph{)}direction} of $D$ is a map $f$ with domain $\cX(D)$ that sends every $X\in\cX(D)$ to a strong component of $D-X$ so that $f(X)\supseteq f(Y)$ whenever $X\subseteq Y$. Ends of digraphs define vertex-directions in the same way as ends of graphs do; for every end $\omega\in \Omega(D)$ we write $f_\omega$ for the vertex-direction that maps every $X\in \cX(D)$ to the strong component $C(X,\omega)$ of $D-X$. We will show that this correspondence between ends and vertex-directions is bijective:

\begin{customthm}{\ref{thm: directions}} Let $D$ be any infinite digraph. The map $\omega \mapsto f_\omega$ with domain $\Omega(D)$ is a bijection between the ends and the vertex-directions of $D$. 
\end{customthm}
While most of the concepts that we investigate have undirected counterparts, there is one important exception: limit edges. If $\omega$ and $\eta$ are distinct ends of a digraph, there exists a finite vertex set $X\in \cX(D)$ such that $\omega$ and $\eta$ live in distinct strong components of $D-X$. Let us say that such a vertex set $X$ \emph{separates} $\omega$ and~$\eta$. For two distinct ends $\omega, \eta\in \Omega(D)$ we call the pair  $(\omega,\eta)$ a \emph{limit edge} from $\omega$ to $\eta$ if $D$ has an edge from $C(X,\omega)$ to $C(X,\eta)$ for every finite vertex set $X$ that separates $\omega$ and $\eta$. 

Similarly, for a vertex $v\in V(D)$ and an end $\omega\in \Omega(D)$ we call the pair $(v,\omega)$ a \emph{limit edge} \emph{from} $v$ \emph{to} $\omega$ if $D$ has an edge from $v$ to $C(X,\omega)$ for every finite vertex set $X\subseteq V(D)$ with $v \not\in  C(X, \omega)$. And we call the pair $(\omega,v)$ a \emph{limit edge} \emph{from} $\omega$ \emph{to} $v$ if $D$ has an edge from $C(X,\omega)$ to $v$ for every finite vertex set $X\subseteq V(D)$ with $v \not\in C(X, \omega)$. We write $\Lambda(D)$ for the set of limit edges of $D$.

The digraph in Figure~\ref{fig:ladder} has a limit edge from the lower end to the upper end, and the digraph in Figure~\ref{fig: dominated ray} has a limit edge from the lower vertex to the unique~end. Let us enumerate from left to right the vertical edges  $e_0,e_1,\ldots$ of the digraph $D$ in Figure~\ref{fig:ladder}. We may think of the $e_n$ as converging towards the unique limit edge. This will be made precise in the second paper of our series.

\begin{figure}
\centering
\def\svgwidth{0.8\columnwidth}
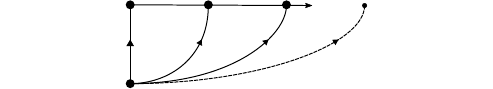
\caption{A digraph with one end (depicted as a small dot) and a limit edge (depicted as a dashed line) from the lower vertex to the end. Every undirected edge in the figure represents a pair of inversely directed edges.} 
\label{fig: dominated ray}
 \end{figure}

Every limit edge $\omega\eta$ between two ends naturally defines a map $f_{\omega\eta}$ with domain $\cX(D)$ as follows. If $X\in \cX(D)$ separates $\omega$ and $\eta$, then $f_{\omega\eta}$ maps $X$ to the set of edges between $C(X,\omega)$ and $C(X,\eta)$; otherwise $f_{\omega\eta}$ maps $X$ to the strong component of $D-X$ in which both ends live. The map $f_{\omega\eta}$ is consistent in that $f_{\omega \eta}(X) \supseteq~f_{\omega\eta}(Y)$ whenever $X \subseteq Y$.\footnote{Here, as later in this context, we do not distinguish rigorously between a strong component and its set of edges. Thus if $Y$ separates $\omega$ and $\eta$ but $X\subseteq Y$ does not, the expression $f_{\omega\eta}(X)\supseteq f_{\omega\eta}(Y)$ means that the strong component $f_{\omega \eta}(X)$ of $D-X$ contains all the edges from the edge set $f_{\omega\eta}(Y)$.}

This gives rise to a second type of direction of a digraph $D$, as follows. Given $X\in \cX(D)$, a non-empty set of edges is a \emph{bundle of $D-X$} if it is the set of all the edges from $C$ to $C'$, or from $v$ to $C$, or from $C$ to $v$, for strong components $C$ and $C'$ of $D-X$ and a vertex $v\in X$.
A \emph{direction} of $D$ is a map $f$ with domain $\cX(D)$ that maps every $X \in \cX(D)$ to a strong component of $D-X$ or to a bundle of $D-X$ so that $f(X) \supseteq f(Y)$ whenever $X \subseteq Y$. We call a direction of $D$ an \emph{edge-direction} of $D$ if there is some $X \in \cX(D)$ such that $f(X)$ is a bundle of $D-X$, in other words, if it is not a vertex-direction. 
Hence $f_{\lambda}$  is an edge-direction for  limit edges $\lambda$ between two ends, and for limit edges  $\lambda$ between vertices and ends an edge-direction $f_{\lambda}$ can be defined analogously. Our next theorem states that every edge-direction can be described in this way:

\begin{customthm}{\ref{thm: edge-direction}} Let $D$ be any infinite digraph. The map $\lambda \mapsto f_{\lambda}$ with domain $\Lambda(D)$ is a bijection between the  limit edges and the  edge-directions of $D$.
\end{customthm}

This paper is organised as follows. In Section~\ref{section: preliminaries} we provide the basic terminology that we use throughout this paper. In Section~\ref{section: ends} we prove the necklace lemma and discuss some basic properties of ends of digraphs. In Section~\ref{section: directions} we prove Theorem~\ref{thm: directions}. Finally, in~Section~\ref{section: limit edges} we investigate limit edges and prove Theorem~\ref{thm: edge-direction}.


\section{Preliminaries}\label{section: preliminaries} 
\noindent Any graph-theoretic notation not explained here can be found in Diestel's textbook~\cite{DiestelBook5}. For the sake of readability, we sometimes omit curly brackets of singletons, i.e., we write $x$ instead of $\{x\}$ for a set~$x$.
Furthermore, we omit the word `directed'---for example in `directed path'---if there is no danger of confusion.

Throughout this paper $D$ is an infinite digraph without multi-edges and without loops, but which may have inversely directed edges between distinct vertices. For a digraph $D$, we write $V(D)$ for the vertex set of $D$, we write $E(D)$ for the edge set of $D$ and $\cX(D)$ for the set of finite vertex sets of $D$. We write edges as ordered pairs $(v,w)$ of vertices $v,w\in V(D)$, and  we usually write $(v,w)$ simply as $vw$. The \emph{reverse} of an edge $vw$ is the edge $wv$. More generally, the \emph{reverse} of a digraph $D$ is the digraph on $V(D)$ where we replace every edge of $D$ by its reverse, i.e., the reverse of $D$ has the edge set $\{\, vw \mid wv \in E(D)\,\}$.
A \emph{symmetric} path is a digraph obtained from an undirected path by replacing each of its edges by its two orientations as separate directed edges. Similarly, a \emph{symmetric ray} is a digraph obtained from an undirected ray by replacing each of its edges by its two orientations as separate directed edges. Hence the reverse of any symmetric path or symmetric ray is a symmetric path or symmetric ray, respectively.

The directed subrays of a ray are its \emph{tails}. Call a ray \emph{solid} in $D$ if it has a tail in some strong component of $D-X$ for every finite vertex set $X\subseteq V(D)$.

Two solid rays in $D$ are \emph{equivalent}, if they have a tail in the same strong component of $D-X$ for every finite vertex set $X \subseteq V(D)$. We call the equivalence classes of this relation the \emph{ends} of $D$ and we write $\Omega(D)$ for the set of ends of $D$.

Similarly, the reverse subrays of a reverse ray are its \emph{tails}. We call a reverse ray \emph{solid} in $D$ if it has a tail in some strong component of $D-X$ for every finite vertex set $X\subseteq V(D)$. With a slight abuse of notation, we say that a reverse ray $R$ \emph{represents} an end $\omega$ if there is a solid ray $R'$ in $D$ that represents $\omega$ such that $R$ and $R'$ have a tail in the same strong component of $D-X$ for every finite vertex set $X\subseteq V(D)$.

For a finite vertex set $X \subseteq V(D)$ and a strong component $C$ of $D-X$ an end $\omega$ is said to \emph{live in} $C$ if one (equivalent every) solid ray in $D$ that represents $\omega$ has a tail in $C$. We write $C(X,\omega)$ for the strong component of $D-X$ in which $\omega$ lives. For two ends $\omega$ and $\eta$ of $D$ a finite set $X \subseteq V(D)$ is said to \emph{separate} $\omega$ and $\eta$ if $C(X, \omega) \neq C(X, \eta )$, i.e., if $\omega$ and $\eta$ live in distinct strong components  of $D-X$.

Given sets $A, B\subseteq V(D)$ of vertices a \emph{path from $A$ to $B$}, or \emph{$A$--$B$} path is a path that meets $A$ precisely in its first vertex and $B$ precisely in its last vertex. We say that a vertex $v$ can \emph{reach} a vertex $w$ in $D$  if there is a $v$--$w$ path in $D$. A set $W$ of vertices is \emph{strongly connected} in $D$ if every vertex of $W$ can reach every other vertex of $W$ in $D[W]$. 

Let $H$ be any fixed digraph. A \emph{subdivision} of $H$ is any digraph that is obtained from $H$ by replacing every edge $vw$ of $H$ by a path $P_{vw}$ with first vertex $v$ and last vertex $w$ so that the paths $P_{vw}$ are  internally disjoint and do not meet~$V(H) \setminus \{v,w\}$. We call the paths $P_{vw}$ \emph{subdividing} paths. If $D$ is a subdivision of $H$, then the original vertices of $H$ are the \emph{branch vertices} of $D$ and the new vertices its \emph{subdividing vertices}. 

\emph{An inflated} $H$ is any digraph that arises from a subdivision $H'$ of $H$ as follows. Replace every branch vertex $v$ of $H'$ by a strongly connected digraph $H_v$ so that the $H_v$ are disjoint and do not meet any subdividing vertex; here replacing means that we first delete $v$ from $H'$ and then add $V(H_v)$ to the vertex set and $E(H_v)$ to the edge set. Then replace every subdividing path $P_{vw}$ that starts in $v$ and ends in $w$ by an $H_v$--$H_w$ path that coincides with $P_{vw}$ on inner vertices. We call the vertex sets $V(H_v)$ the \emph{branch sets} of the inflated $H$. A \emph{necklace} is an inflated symmetric ray with finite branch sets; the branch sets of a necklace are its \emph{beads}. (See Figure~\ref{fig: necklace} for an example of a necklace.)

A vertex set $Y \subseteq V(D)$ \emph{separates} $A$ and $B$ in $D$ with $A,B\subseteq V(D)$ if every $A$--$B$ path meets $Y$, or if every $B$--$A$ path meets $Y$. For two vertices $v$ and $w$ of $D$ we say that $Y\subseteq V(D) \setminus \{v,w\}$ \emph{separates} $v$ and $w$ in $D$, if it separates $\{v\}$ and $\{w\}$ in $D$. A \emph{separation} of $D$ is an ordered pair $(A,B)$ of vertex sets $A$ and $B$ with $V(D) =A \cup B$ for which there is no edge from $B\setminus A$ to $A\setminus B$. The set $A\cap B$ is the  \emph{separator} of $(A,B)$ and the vertex sets $A$ and $B$ are the two \emph{sides} of the separation $(A,B)$. Note that the separator of a separation indeed separates its two sides. The size of the separator of a separation $(A,B)$ is the \emph{order} of $(A,B)$. Separations of finite order are also called \emph{finite order separations}. There is a natural way to compare separations, namely one defines $(A_1,B_1)\le (A_2,B_2)$ if $A_1\subseteq A_2$ and $B_2\subseteq B_1$. Regarding to this partial order $(A_1\cup A_2, B_1\cap B_2)$ is the supremum and $(A_1\cap A_2,B_1\cup B_2)$ is the infimum of two separations $(A_1,B_1)$ and $(A_2,B_2)$. More generally, if $((A_i,B_i))_{i\in I}$ is a family of separations, then 
$$(\bigcup_{i\in I}A_i , \bigcap_{i\in I}B_i)\;\textup{ and }\;(\bigcap_{i\in I} A_i, \bigcup_{i\in I}B_i)$$
 is its supremum and infimum, respectively.

For vertex sets $A, B\subseteq V(D)$ let $E(A,B)$ be the set of edges from $A$ to $B$, i.e., $E(A,B)= (A\times B)\cap E(D)$. Given a subdigraph $H\subseteq D$, a \emph{bundle} of $H$ is a non-empty edge set of the form $E(C,C')$, $E(v,C)$, or $E(C,v)$ for strong components $C$ and $C'$ of $H$ and a vertex $v\in V(D) \setminus V(H)$.
We say that $E(C,C')$ is a bundle, \emph{between strong components} and $E(v,C)$ and $E(C,v)$ are bundles \emph{between a vertex and a strong component.} In this paper we consider only bundles of subdigraphs $H$ with $H=D-X$ for some $X\in \cX(D)$.


Now, consider a vertex $v\in V(D)$, two ends $\omega, \eta\in \Omega(D)$ and a finite vertex set $X\subseteq V(D)$. If $X$ separates $\omega$ and $\eta$ we write $E(X,\omega\eta)$ as short for $E(C(X,\omega),C(X,\eta))$. Similarly, if $v \in C'$ for a strong component $C' \neq C(X,\omega)$ of $D-X$ we write $E(X,v\omega)$ and $E(X,\omega v)$ as short for the edge set $E(C', C(X,\omega))$ and $E(C(X,\omega),C')$, respectively. If $v \in X$  we write $E(X,v\omega)$ and $E(X,\omega v)$ as short for $E(v, C(X,\omega))$ and $E(C(X,\omega),v)$, respectively.  Note that $E(X,\omega\eta)$, $E(X,v\omega)$ and $E(X,\omega v)$ each are bundles if they are non-empty.

An \emph{arborescence} is a rooted oriented tree $T$ that contains for every vertex \mbox{$v \in V(T)$} a directed path from the root to $v$. 
The vertices of any arborescence are partially ordered as $v \leq_T w$ if $T$ contains a directed path from $v$ to $w$. We write $\uc{v}_T$ for the up-closure of $v$ in $T$.

A \emph{directed star} is an arborescence whose underlying tree is an undirected star that is centred in the root of the arborescence. A \emph{directed comb} is the union of a ray with infinitely many finite disjoint paths (possibly trivial) that have precisely their first vertex on $R$. Hence the underlying graph of a directed comb is an undirected comb. The \emph{teeth} of a directed comb or reverse directed comb are the teeth of the underlying comb. The ray from the definition of a comb is the \emph{spine} of the comb.

\section{Necklace Lemma}\label{section: ends} 
\noindent This section is dedicated to the necklace lemma. We begin with our directed version of the star-comb lemma, which motivates the necklace lemma. Then we continue with the definition of the $U$-rank, in fact we will define the $U$-rank in a slightly more general setting by considering not only one set $U$ but finitely many. Finally, we prove the necklace lemma and provide two of its applications.

The star-comb lemma~\cite{DiestelBook5} for undirected graphs is a standard tool in infinite graph theory and reads as follows:
\begin{lemma}[Star-Comb Lemma]\label{lemma: star-comb}
Let $U$ be an infinite set of vertices in a connected undirected graph $G$.
Then $G$ contains a comb 
with all its teeth in $U$ or a subdivided infinite star with all its leaves in $U$.
\end{lemma}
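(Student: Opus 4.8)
The plan is to reduce the statement to the case of a tree and then run a pigeonhole argument on it. First I would replace $G$ by a spanning tree $T\subseteq G$ — one exists since $G$ is connected — and observe that a comb, respectively a subdivided infinite star, sitting inside $T$ is also one sitting inside $G$; so we may assume $G=T$. I would then fix an arbitrary root $r\in V(T)$ and, for each vertex $v$, let $T_v$ denote the subtree of $T$ consisting of $v$ together with all vertices whose path to $r$ in $T$ passes through $v$. Call $v$ \emph{heavy} if $U\cap V(T_v)$ is infinite; since $U$ is infinite, $r$ itself is heavy.

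Next I would split into two cases according to how heavy vertices branch. \emph{Case 1:} some heavy vertex $v$ has infinitely many children $c$ with $V(T_c)\cap U\neq\emptyset$. Then for each such $c$ I would choose a vertex $u_c\in V(T_c)\cap U$ together with the $v$--$u_c$ path in $T$; distinct children span disjoint subtrees, so these paths pairwise meet only in $v$, and their union is a subdivided infinite star centred at $v$ with all leaves in $U$. \emph{Case 2:} every heavy vertex has only finitely many children whose subtree meets $U$. Since for a heavy vertex $v$ we have $V(T_v)\cap U=(\{v\}\cap U)\cup\bigcup_{c}(V(T_c)\cap U)$, the union ranging over the children $c$ of $v$, and the left side is infinite while $\{v\}\cap U$ is finite and all but finitely many summands are empty, pigeonhole forces at least one child of $v$ to be heavy again. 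Starting at $r$ and iterating, I obtain a ray $R=v_0v_1v_2\cdots$ in which every $v_i$ is heavy and $v_{i+1}$ is a child of $v_i$.

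It then remains to hang infinitely many teeth in $U$ onto $R$. For each $i$, heaviness of $v_i$ together with the finiteness assumption of Case 2 leaves two possibilities: either $v_i\in U$, giving a trivial tooth at $v_i$, or some child $c\neq v_{i+1}$ of $v_i$ satisfies $V(T_c)\cap U\neq\emptyset$, giving a non-trivial tooth along a $v_i$--$u$ path running into $T_c$ for some $u\in U$. The step I expect to need the most care is showing that one of these two possibilities occurs for \emph{infinitely many} $i$: otherwise there is an index $N$ beyond which no $v_i$ lies in $U$ and the only child of $v_i$ meeting $U$ is $v_{i+1}$, so that $U\cap V(T_{v_N})=U\cap V(T_{v_{N+1}})=\cdots$; since $\bigcap_{i\geq N}V(T_{v_i})=\emptyset$, this common set is empty, contradicting the heaviness of $v_N$. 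Finally I would check disjointness: a non-trivial tooth at $v_i$ lies in $V(T_c)$ for a child $c\neq v_{i+1}$, hence avoids $T_{v_{i+1}}\supseteq T_{v_j}$ for all $j>i$ and meets $R$ only in $v_i$. Together with $R$, these infinitely many teeth form the desired comb with all teeth in $U$. (One could alternatively first pass to a countable $G$ by taking the union of $u_0$--$u_i$ paths for an enumeration $u_0,u_1,\dots$ of a countable subset of $U$, but the argument above does not need this.)
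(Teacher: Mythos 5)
Your proof is correct and complete, but there is nothing in the paper to compare it against: the paper does not prove this lemma, it quotes it as a known result from Diestel's textbook (Lemma~8.2.2 there) and only uses it as a tool. Your route --- pass to a spanning tree, root it, call a vertex \emph{heavy} if its up-subtree meets $U$ infinitely, and then either find a vertex with infinitely many children whose subtrees meet $U$ (yielding the subdivided star) or follow heavy children to obtain a ray and hang teeth on it --- is essentially the standard tree argument for this lemma, and you handle the two points that genuinely need care: that a tooth is available at infinitely many indices (your observation that otherwise $U\cap V(T_{v_N})$ would equal a set contained in $\bigcap_{i\ge N}V(T_{v_i})=\emptyset$ is sound, since the path from any fixed vertex to the root is finite and the $v_i$ are distinct), and the pairwise disjointness of the teeth paths, which follows from the disjointness of subtrees of distinct children as you say. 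Two cosmetic remarks only: in Case~1 the heaviness of $v$ is never used (having infinitely many children whose subtrees meet $U$ suffices), and a non-trivial tooth path at $v_i$ lies in $\{v_i\}\cup V(T_c)$ rather than in $V(T_c)$ --- though the conclusion you draw from it, that it meets $R$ only in $v_i$ and avoids all later teeth, is stated correctly.
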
 
Let us see how to translate the star-comb lemma to digraphs. 
Given a set $U$ of vertices in a digraph, a comb \emph{attached} to $U$ is a comb with all its teeth in $U$ and a star \emph{attached} to $U$ is a subdivided infinite star with all its leaves in $U$.
The set of teeth is the \emph{attachment set} of the comb and the set of leaves is the \emph{attachment set} of the star.
We adapt the notions of `attached to' and `attachment sets' to reverse combs or reverse stars, respectively.

\begin{lemma}[Directed Star-Comb Lemma]
Let $D$ be any strongly connected digraph and let $U\subseteq V(D)$ be infinite. Then $D$ contains a star or comb attached to $U$ and a reverse star or reverse comb attached to $U$ sharing their attachment sets.
\end{lemma}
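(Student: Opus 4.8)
The plan is to deduce the directed star-comb lemma from the undirected version (Lemma~\ref{lemma: star-comb}) applied carefully in the right auxiliary graph, iterating the argument to keep the attachment sets synchronised. First I would recall that, since $D$ is strongly connected, between any two vertices there are paths in both directions, so every `undirected' structure we find will have a directed counterpart; the subtlety is only in how the two structures (forward and reverse) are linked and in making the attachment sets coincide.

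Here is the intended argument in more detail. Let $G$ be the underlying undirected multigraph of $D$ (or equivalently the graph with an edge $vw$ whenever $D$ has $vw$ or $wv$); $G$ is connected. Apply Lemma~\ref{lemma: star-comb} to $G$ and the infinite set $U$: we obtain either an undirected comb or an undirected subdivided star attached to some infinite subset $U'\subseteq U$ of teeth/leaves. In the star case, let $c$ be the centre; since $D$ is strongly connected, each undirected leaf-path from $c$ to a leaf $u\in U'$ gives, by replacing each undirected edge with a shortest directed path realising that adjacency and then extracting a path, a directed $c$--$u$ path in $D$, and likewise a directed $u$--$c$ path; taking a disjoint infinite subfamily (by a standard greedy/disjointness pruning, shrinking $U'$ further) yields a directed star centred at $c$ and a reverse directed star centred at $c$, both attached to the same infinite set $U''\subseteq U'$. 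In the comb case, with spine $R=v_0v_1\cdots$ (undirected) and teeth $u_i\in U'$, I would pass to a directed structure along the spine: because $D$ is strongly connected one can route a directed ray `following' $R$ and, as above, directed paths to and from each tooth; again prune to an infinite disjoint subfamily so that both a directed comb and a reverse directed comb attached to a common infinite attachment set $U''$ are obtained.

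The main obstacle — and the step that needs real care rather than routine work — is producing both a directed comb (or star) and a reverse directed comb (or star) that genuinely \emph{share} their attachment set, while ensuring the relevant paths are pairwise disjoint. The naive approach of handling the forward and reverse directions independently gives two different attachment sets; to fix this, I would proceed in two rounds. In the first round, extract from the undirected comb/star a directed comb/star (spine a directed ray or centre a vertex) attached to an infinite $U_1\subseteq U$, discarding vertices used up. In the second round, apply the directed star-comb construction again inside $D$ but now aiming at $U_1$ as the target set, and crucially running it on the \emph{reverse} digraph $\overleftarrow{D}$ (also strongly connected): a directed comb/star of $\overleftarrow{D}$ attached to $U_2\subseteq U_1$ is precisely a reverse directed comb/star of $D$ attached to $U_2$. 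Restricting the first structure to the teeth/leaves lying in $U_2$ (still infinite) gives a directed comb or star and a reverse directed comb or star both attached to $U_2$. Throughout, disjointness of the subdividing paths within each structure is secured by the usual transfinite/greedy selection: when choosing the path to the next tooth, route it to avoid the finitely many vertices already committed, which is possible since each previously chosen path is finite and $D$ is strongly connected and infinite. I would also note the easy edge cases (e.g.\ if infinitely many vertices of $U$ already lie on a single directed ray or are dominated by one vertex, the conclusion is immediate), to streamline the pruning arguments.
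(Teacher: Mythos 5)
There is a genuine gap in the first round of your construction, namely in the passage from the undirected comb/star in the underlying graph $G$ to a directed comb/star in $D$. Two things go wrong. First, your disjointness pruning rests on the claim that, since $D$ is strongly connected, you can always route the next $c$--$u$ (or spine--tooth) path so as to avoid the finitely many vertices already committed; but strong connectivity is not preserved under deleting vertices, so after finitely many steps all remaining routes may be forced through already-used vertices, and the greedy selection stalls. This is not a technicality: the whole point of the star--comb dichotomy is that such a greedy argument fails in general, and when it fails one must switch to the other structure type. Second, and more fundamentally, your conversion preserves the case (undirected comb $\Rightarrow$ directed comb, undirected star $\Rightarrow$ directed star), and that is false. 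Consider the digraph with vertices $c,u_1,u_2,\dots$ and edges $c\to u_i$, $u_i\to c$ for all $i$, together with $u_{i+1}\to u_i$. It is strongly connected and infinite, and its underlying graph contains the ray $u_1u_2\cdots$, so the undirected star--comb lemma applied to $U=\{u_1,u_2,\dots\}$ may well return a comb; but every directed path in this digraph is finite (after the single allowed visit to $c$ one can only descend), so $D$ contains no directed ray at all, hence no directed comb -- the correct directed structure attached to $U$ here is a star centred at $c$. So ``route a directed ray following $R$'' cannot be made to work, and the type of structure must be allowed to change when you pass from $G$ to $D$.

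The paper sidesteps both problems by not applying the undirected lemma to all of $G$: it takes a spanning arborescence $T$ of $D$ (which exists by strong connectivity) and applies the star--comb lemma inside the undirected tree underlying $T$. In that tree the paths from the spine (started at the root) or from the centre towards the teeth/leaves run along the tree-order, i.e.\ along edges of the arborescence, so the comb or star obtained is automatically a \emph{directed} comb or star -- no edge-by-edge realisation and no disjointness pruning are needed, and the case may come out differently from what the underlying graph of $D$ would suggest. Your second round (running the construction in the reverse digraph, aimed at the attachment set $U_1$ of the first structure, then thinning out) matches the paper's second step and is fine; it is the first-round conversion that needs to be replaced by the arborescence argument or something equivalent.
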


\begin{proof}
Since $D$ is strongly connected we find a spanning arborescence $T$.   Applying the star-comb lemma in the undirected tree underlying $T$ yields a comb or a star attached to $U$ (without loss of generality the spine starts in the root of $T$). Let $U'$ be the attachment set in either case. 

Again using that $D$ is strongly connected we find a reverse spanning arborescence~$T'$. Applying the star-comb lemma a second time, now in the undirected tree underlying $T'$ yields a reverse comb or a reverse star attached to $U'$. Thinning out the teeth or leaves of the  comb or  star, respectively, completes the proof. 
\end{proof}
The star-comb lemma fundamentally describes how an infinite set of vertices can be connected in an infinite graph, namely through stars and combs.  Similarly, the directed  star-comb lemma describes the nature of strong connectedness in infinite digraphs. Indeed, adding a single path from the first vertex  of the reverse comb's spine or centre of the reverse star to the first vertex  of the comb's spine or centre of the star, respectively, yields a strongly connected digraph that intersects $U$ infinitely. We shall use the directed star-comb lemma in the proof of one of our main results in the second part of this series~\cite{EndsOfDigraphsII}.

As noted in the introduction the star-comb lemma is often used in order to find an end of a given undirected graph $G$ in the closure of an infinite set $U\subseteq V(G)$ of vertices. This is usually done in situations where $G$ contains no infinite subdivided star with all its leaves in $U$; for example if the graph is locally finite. Then the star-comb lemma in $G$ applied to $U$ always returns a comb with all its teeth in $U$ and the end represented by the comb's spine is contained in the closure of $U$.  

The directed star-comb lemma however does not manage the task of finding an end of a digraph in the closure of an infinite set of vertices. Consider for example the digraph $D$ that is obtained from the digraph in Figure~\ref{fig:ladder} by subdividing each vertical edge once. We write $U$ for the set of subdividing vertices. As $D$ contains neither an infinite star nor an infinite reverse star, the directed star-comb lemma applied to $U$ returns a comb attached to $U$ and a reverse comb attached to $U$ sharing their attachment sets. Therefore we would expect that the ends that are represented by the spines are contained in the closure of $U$. But $U$ does not have any end in its closure because the subdividing vertices all lie in singleton strong components of $D$.

The necklace lemma will perform the task of finding and end in the closure of a given set of vertices. Before we state it, we need to introduce the $\mathcal{U}$-rank for digraphs: For this, consider a finite set $\cU$ and think of $\cU$ as consisting of infinite sets of vertices. We define in a transfinite recursion the class of digraphs that have a $\cU$-rank. A digraph $D$ has \emph{$\cU$-rank} $0$ if there is a set $U\in\cU$ such that $U\cap V(D)$ is finite. It has \emph{$\cU$-rank} $\alpha$ if it has no $\cU$-rank $<\alpha$ and there is some $X\in\cX(D)$ such that every strong component of $D-X$ has a $\cU$-rank $<\alpha$.  In the case  $U=V(D)$ we call the $U$-rank of $D$  the \emph{rank of $D$} (provided that $D$ has a $U$-rank). Note that  if $U \supseteq V(D)$ for a digraph, then its $U$-rank equals its rank.

We remark that our notion of ranking extends the notion of Schmidt's \emph{ranking of rayless graphs}, in that the rank of a given undirected graph $G$ is precisely the rank of the digraph obtained from $G$ by replacing every edge by its two orientations as separate directed edges, see \cite{Schmidt1983} or Chapter~8.5 of~\cite{DiestelBook5} for Schmidt's rank. More generally, for a  set $U$, our $U$-rank of digraphs extends the notion of the $U$-rank of graphs, in that an undirected graph $G$ has a $U$-rank if and only if the digraph that is obtained from $G$ by replacing every edge by its two orientations as separate directed edges has a $U$-rank; see \cite{StarComb1StarsAndCombs} for the definition of the $U$-rank of an undirected graph. 

Before we prove the necklace lemma, we provide two basic lemmas for the $\cU$-rank of digraphs:
\begin{lemma}\label{lemma: rank and subgraph}
Let $D$ be a digraph and let $\cU$ be a finite set. 
If $D$ has $\cU$-rank $\alpha$ and $H\subseteq D$, then $H$ has some $\cU$-rank $\le\alpha$. 
\end{lemma}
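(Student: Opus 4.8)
The plan is to induct transfinitely on the $\cU$-rank $\alpha$ of $D$. More precisely, I would prove by induction on $\alpha$ the statement: for every digraph $D$ of $\cU$-rank $\alpha$ and every subdigraph $H \subseteq D$, the digraph $H$ has a $\cU$-rank, and that rank is $\le \alpha$. (Note that one must allow for $H$ to have strictly smaller rank, so the induction hypothesis has to be formulated as `$\le$' rather than `$=$'; the statement asks only for the existence of \emph{some} $\cU$-rank $\le\alpha$.)

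For the base case $\alpha = 0$: if $D$ has $\cU$-rank $0$, there is some $U \in \cU$ with $U \cap V(D)$ finite. Since $V(H) \subseteq V(D)$, we get $U \cap V(H) \subseteq U \cap V(D)$ finite, so $H$ has $\cU$-rank $0$ as well. For the successor/limit step, suppose $D$ has $\cU$-rank $\alpha > 0$, witnessed by some $X \in \cX(D)$ such that every strong component of $D - X$ has $\cU$-rank $< \alpha$. Set $X' := X \cap V(H) \in \cX(H)$. The key observation is that every strong component $C'$ of $H - X'$ is contained in some strong component $C$ of $D - X$: indeed $H - X' = H - X \subseteq D - X$, and a strongly connected subset of $D-X$ lies within a single strong component of $D-X$. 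Now $C$ has $\cU$-rank $\beta_C < \alpha$ by the choice of $X$, and $C'$ is a subdigraph of $C$, so by the induction hypothesis $C'$ has some $\cU$-rank $\le \beta_C < \alpha$. Hence every strong component of $H - X'$ has $\cU$-rank $< \alpha$, so $H$ has a $\cU$-rank and it is $\le \alpha$; this last inequality is immediate from the recursive definition once we know such a witnessing $X'$ exists (the rank of $H$ is the least ordinal for which such a witness can be found, and $\alpha$ is one candidate — more carefully, if $H$ already had $\cU$-rank $<\alpha$ there is nothing to prove, and otherwise the witness $X'$ shows the rank is exactly bounded by $\alpha$).

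The only genuine subtlety — and the step I would be most careful about — is the claim that each strong component of $H - X'$ embeds into a strong component of $D - X$, i.e.\ that we do not create `new' strong components by passing to a subdigraph in a way that breaks the rank bound. This is fine because deleting vertices and edges can only refine the strong-component partition, never coarsen it: any two vertices strongly connected in $H - X'$ are strongly connected in $D - X$, using the same paths. A secondary point worth stating explicitly is that $X' = X \cap V(H)$ is indeed finite (it is a subset of the finite set $X$) and that $H - X' \subseteq D - X$ as digraphs, which is why the induction hypothesis applies to $C' \subseteq C$. Everything else is a direct unwinding of the recursive definition of $\cU$-rank.
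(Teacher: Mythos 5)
Your proof is correct and follows essentially the same route as the paper's: transfinite induction on the $\cU$-rank, with the key observation that every strong component of $H-X$ lies inside a strong component of $D-X$, so the induction hypothesis applies. Your extra care in passing to $X'=X\cap V(H)$ (and in noting that the resulting witness yields a rank $\le\alpha$) only makes explicit what the paper leaves implicit.
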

\begin{proof}
We prove the statement by transfinite induction on the $\cU$-rank of $D$. Clearly, if $D$ has  $\cU$-rank $0$, then so does every subdigraph. Let $D$ be a digraph with $\cU$-rank $\alpha$ and $H\subseteq D$. We find a finite vertex set $X\subseteq V(D)$ such that every strong component of $D-X$ has $\mathcal{U}$-rank less than $\alpha$. As every strong component of $H-X$ is contained in a strong component of $D-X$, every strong component of $H-X$ has a $\cU$-rank less than $\alpha$ by the induction hypothesis. Hence $H$ has a $\cU$-rank $\leq \alpha$
\end{proof}

\newpage

\begin{lemma}\label{lemma: rank inf many comps meet U for witness}
Let $D$ be any digraph and let $\cU$ be a finite set. If $D$ has a $\cU$-rank $\alpha>0$ and $X\subseteq V(D)$ is a finite vertex set such that every strong component of $D-X$ has a $\cU$-rank  $<\alpha$, then infinitely many strong components of $D-X$ meet every set in $\cU$.
\end{lemma}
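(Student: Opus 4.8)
Suppose for contradiction that only finitely many strong components of $D-X$ meet every set in $\cU$; equivalently, there is some $U \in \cU$ such that all but finitely many strong components of $D-X$ miss $U$. Call these finitely many exceptional strong components $C_1, \dots, C_k$ (those that meet $U$), and let $X' := X \cup V(C_1) \cup \dots \cup V(C_k)$. The point of passing to $X'$ is that $X'$ should still be finite — and this is exactly where I expect the main obstacle to lie, since a priori the $C_i$ need not be finite. The fix is that $\alpha > 0$ and each $C_i$ has $\cU$-rank $<\alpha$, so in particular each $C_i$ has \emph{some} $\cU$-rank; I would need the $C_i$ to actually be \emph{finite}. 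This is not automatic, so instead of enlarging $X$ to swallow the $C_i$ I would argue more carefully inside each $C_i$.

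Here is the cleaner line I would pursue. Suppose, again for contradiction, that some $U \in \cU$ meets only finitely many strong components of $D-X$; list them as $C_1, \dots, C_k$. Each $C_i$ is a strong component of $D-X$ and hence has $\cU$-rank $\beta_i < \alpha$. I claim each $C_i$ in fact has $\cU$-rank $0$: for if $\beta_i > 0$, then by definition there is a finite $X_i \subseteq V(C_i)$ such that every strong component of $C_i - X_i$ has $\cU$-rank $< \beta_i$, and iterating (using that the ordinals $\beta_i$ descend, so this process terminates) we eventually reach, inside $C_i$, a situation where we have a finite set $X_i$ with every strong component of $C_i - X_i$ of $\cU$-rank $0$. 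So after replacing $X$ by the finite set $X^* := X \cup \bigcup_{i=1}^k X_i$, we may assume each of the finitely many strong components of $D - X^*$ that meets $U$ has $\cU$-rank $0$ — meaning for each such component $C$ there is some $U_C \in \cU$ with $U_C \cap V(C)$ finite. But there are only finitely many such components, and $\cU$ is finite, so I can consolidate: I want to conclude that $D$ itself has $\cU$-rank $0$, i.e. that $U' \cap V(D)$ is finite for some $U' \in \cU$ — yet this need not follow directly, because different components may be handled by different members of $\cU$.

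To get around the consolidation issue I would instead argue that $D$ has a $\cU$-rank $< \alpha$, contradicting the hypothesis that $D$ has $\cU$-rank $\alpha$ (which includes that $D$ has no $\cU$-rank $<\alpha$). Concretely: having replaced $X$ by the finite set $X^*$ above, every strong component of $D - X^*$ either misses $U$ or has $\cU$-rank $0$; and the ones that miss $U$ — there may be infinitely many — each have $\cU$-rank $\le \alpha$ by Lemma~\ref{lemma: rank and subgraph}, but more usefully each has $\cU$-rank $< \alpha$ since it is a subdigraph of a strong component of the original $D - X$, which had $\cU$-rank $<\alpha$. Hence \emph{every} strong component of $D - X^*$ has $\cU$-rank $< \alpha$, so by definition $D$ has $\cU$-rank $\le \alpha' $ for some ordinal $\alpha'$ that is the supremum of these smaller ranks plus handling — in any case $D$ would have a $\cU$-rank strictly below $\alpha$ only if that supremum is $< \alpha$, which holds because the set of ranks appearing is a set of ordinals all $< \alpha$ and (being indexed, up to the finitely many $\cU$-rank-$0$ exceptions, by strong components of $D-X$) bounded below $\alpha$. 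Wait — this last boundedness is the real crux, since infinitely many ordinals below $\alpha$ can have supremum $\alpha$.

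Let me record what I think the correct and honest plan is. The hypothesis is that $X$ witnesses $\cU$-rank $\alpha$: every strong component of $D-X$ has $\cU$-rank $<\alpha$. If only finitely many strong components of $D-X$ meet some fixed $U \in \cU$, then I want to produce a \emph{single} finite set witnessing a smaller rank for $D$, contradicting minimality of $\alpha$. The key realization: let $C_1,\dots,C_k$ be the strong components of $D-X$ meeting $U$, with $\cU$-ranks $\beta_1,\dots,\beta_k < \alpha$; set $\beta := \max_i \beta_i < \alpha$. Inside each $C_i$, unfold the rank definition finitely many times (legitimate since ordinals strictly decrease) to obtain a finite $X_i \subseteq V(C_i)$ such that every strong component of $C_i - X_i$ has $\cU$-rank $0$, hence in particular $\cU$-rank $< 1 \le$ anything positive. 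Then $X^\ast := X \cup X_1 \cup \dots \cup X_k$ is finite, and every strong component $C$ of $D - X^\ast$ satisfies: either $C \subseteq C_i - X_i$ for some $i$, so $C$ has $\cU$-rank $0$; or $C$ misses $U$ entirely (it is contained in a strong component of $D-X$ disjoint from $U$, for which some member of $\cU$ — namely $U$ — meets it finitely, in fact not at all), so $C$ has $\cU$-rank $0$. Therefore \emph{every} strong component of $D - X^\ast$ has $\cU$-rank $0$, which by definition gives $D$ a $\cU$-rank $\le 1 < \alpha$ (using $\alpha > 0$), the desired contradiction. I expect the main obstacle to be the finite-unfolding step inside each $C_i$: one must check that iterating "pick a witnessing finite set for the current rank" genuinely terminates after finitely many steps and leaves a single finite set below which all strong components have rank $0$ — this is a small transfinite-induction lemma (on $\beta_i$) that I would prove as a preliminary claim, noting that it is exactly the digraph analogue of the standard fact that a rayless graph of rank $\beta$ has a finite set whose deletion leaves rank-$0$ pieces after finitely many peeling steps.
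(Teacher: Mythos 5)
The central step of your final plan is false and cannot be supplied as a ``small preliminary lemma''. You claim that inside each exceptional component $C_i$ of $\cU$-rank $\beta_i$ one can find a \emph{single} finite set $X_i\subseteq V(C_i)$ such that every strong component of $C_i-X_i$ has $\cU$-rank $0$. One application of the definition gives a finite set leaving only components of rank $<\beta_i$, but there may be \emph{infinitely many} of these, each needing its own finite witness, so the ``iterate until rank $0$'' process does not stay finite. In fact, by the definition of the rank, a digraph admits a finite vertex set whose deletion leaves only rank-$0$ strong components if and only if its $\cU$-rank is at most $1$; so your preliminary claim asserts that every ranked digraph has rank at most $1$, collapsing the hierarchy. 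Concretely, for $\cU=\{V(D)\}$ the disjoint union of infinitely many symmetric infinite stars has rank $2$, and no finite vertex set leaves only finite (i.e.\ rank-$0$) strong components; the ``standard fact'' about rayless graphs you appeal to fails for the same reason, since Schmidt's rank-$0$ graphs are exactly the finite ones.

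The repair is to aim below $\alpha$ rather than at $0$, which is what the paper does: for each of the finitely many components $C\in\mathcal{C}$ (where $\mathcal{C}$ is the set of strong components of $D-X$ that meet \emph{every} set in $\cU$) take just the one finite witness $X_C$ from the definition of its rank $\beta_C<\alpha$; every other strong component of $D-X$ misses some set of $\cU$ and hence has rank $0$; then $Y:=X\cup\bigcup_{C}X_C$ makes every strong component of $D-Y$ have rank $<\max_C\beta_C$ or rank $0$, so $D$ gets a $\cU$-rank $\le\max_C\beta_C<\alpha$, a contradiction (at least when some $\beta_C>0$). Two further slips: first, your opening ``equivalently'' changes the statement --- the negation of ``infinitely many strong components meet every set in $\cU$'' is only that $\mathcal{C}$ is finite, which for $|\cU|\ge 2$ does not yield a single $U\in\cU$ missed by all but finitely many components, so even a correct run of your argument would prove only the weaker ``for each $U\in\cU$ separately'' version; second, ``rank $\le 1<\alpha$ using $\alpha>0$'' is wrong, since $\alpha>0$ only gives $\alpha\ge 1$. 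Your unease there is well founded: when all relevant components have rank $0$ one only gets rank $\le 1$, and closing the case $\alpha=1$ requires using that $D$ has no $\cU$-rank $0$ --- for the singleton $\cU=\{U\}$ (the only case the paper later uses) this works because $U\cap V(D)$ is infinite yet would be covered by $X$ plus finitely many finite sets, whereas for $|\cU|\ge 2$ and $\alpha=1$ the strong form of the statement actually fails (an edgeless digraph on two disjoint infinite sets $U_1,U_2$ is a counterexample), a boundary case about which the paper's one-line ``$Y$ witnesses a rank $<\alpha$'' is also silent.
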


\begin{proof}
Suppose for a contradiction that the set $\mathcal{C}$ of strong components of $D-X$ that meet every set in $\cU$ is finite. We find for every $C\in \mathcal{C}$ a finite vertex set $X_C\subseteq V(C)$ witnessing that $C$ has a $\cU$-rank $<\alpha$. Let $Y$ be the union of $X$ with all the finite vertex sets $X_C$. Then $Y$ witnesses that $D$ has a $\cU$-rank $<\alpha$ contradicting our assumption that $D$ has $\cU$-rank $\alpha$. 
\end{proof}

Given a set $\cU$, a necklace $N\subseteq D$ is \emph{attached to} $\cU$ if infinitely many beads of $N$ meet every set in $\cU$. 
\begin{mainlemma}[Necklace Lemma]\label{lemma: necklace lemma}
Let $D$ be any digraph and  $\cU$ a finite set of vertex sets of $D$. Then the following statements are complementary: \begin{enumerate}
    \item $D$ has a necklace attached to $\cU$;    
    \item $D$ has a $\cU$-rank. 
\end{enumerate}
\end{mainlemma}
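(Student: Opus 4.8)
The two statements are clearly mutually exclusive: if $D$ had both a necklace $N$ attached to $\cU$ and a $\cU$-rank, then by Lemma~\ref{lemma: rank and subgraph} the necklace $N$ would itself have a $\cU$-rank, yet a routine transfinite induction shows a necklace attached to $\cU$ has no $\cU$-rank (removing any finite vertex set $X$ leaves all but finitely many beads untouched, so one of the two ``directions'' of the necklace survives as a sub-necklace inside a single strong component of $N-X$, and that sub-necklace is again attached to $\cU$; so no $X$ can drop the rank, and rank $0$ is impossible since infinitely many beads meet each $U\in\cU$). The real content is the converse: if $D$ has \emph{no} $\cU$-rank, then $D$ contains a necklace attached to $\cU$. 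So the plan is to assume $D$ has no $\cU$-rank and build the necklace bead by bead.

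\textbf{Building the necklace.} The idea is to construct the necklace as an increasing union of finite ``partial necklaces'', maintaining the invariant that the piece of $D$ still ahead of us has no $\cU$-rank. Suppose we have built beads $B_0,\dots,B_k$ (finite strongly connected sets) joined consecutively by the two orientations of finite connecting paths, lying inside some region $D_k\subseteq D$ that has no $\cU$-rank, with $B_k$ a ``boundary'' strong component. Because $D_k$ has no $\cU$-rank, for \emph{every} finite $X\subseteq V(D_k)$ some strong component of $D_k-X$ has no $\cU$-rank; iterating and taking $X$ larger and larger, one finds that $D_k$ has a strong component $C$ of $D_k-(\text{current beads})$ that has no $\cU$-rank. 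Crucially, to extend the necklace we need (a) a strong component $C$ ``beyond'' $B_k$ with no $\cU$-rank, (b) a finite strongly connected bead $B_{k+1}\subseteq C$ meeting every $U\in\cU$ — which exists because, by Lemma~\ref{lemma: rank inf many comps meet U for witness} applied appropriately, the relevant vertex sets $U$ all meet $C$, so picking one vertex $u_U\in U\cap V(C)$ for each $U\in\cU$ and taking a finite strongly connected subdigraph of $C$ through all the $u_U$ (possible since $C$ is strongly connected) gives $B_{k+1}$ — and (c) a finite path from $B_k$ to $B_{k+1}$ and one from $B_{k+1}$ back, together with making sure the new ``rest'' $D_{k+1}$ (a strong component of $C$ minus the newly used finite vertex set) still has no $\cU$-rank, which is exactly what ``no $\cU$-rank'' guarantees at each stage. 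The union of all $B_k$ and the connecting paths is then an inflated symmetric ray with finite branch sets, i.e.\ a necklace, and since every $B_k$ meets every $U\in\cU$ it is attached to $\cU$.

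\textbf{Making (a)--(c) compatible — the main obstacle.} The delicate point is arranging the connecting paths \emph{and} the ``rest has no rank'' invariant simultaneously, since choosing the finite path from $B_k$ into $C$ and back uses up finitely many vertices of $C$, and we must ensure that after deleting them some strong component of what remains still has no $\cU$-rank and still sees all of $\cU$. This is where ``$D$ has no $\cU$-rank'' must be used in its sharpest form: for a digraph without a $\cU$-rank and \emph{any} finite $X$, not merely does some strong component of $D-X$ lack a $\cU$-rank, but (combining with Lemma~\ref{lemma: rank inf many comps meet U for witness} on contrapositive-type reasoning) we can always locate a strong component that both lacks a $\cU$-rank and meets every $U\in\cU$ — indeed if every $\cU$-meeting strong component had a $\cU$-rank we could bound the rank of $D$ as in the proof of Lemma~\ref{lemma: rank inf many comps meet U for witness}. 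So at each step I would first fix a vertex $u_U\in U$ in the current ``no-rank, $\cU$-meeting'' strong component $C_k$ for every $U$, build the finite bead $B_{k+1}$ through these in $C_k$, then connect $B_k$ to $B_{k+1}$ and back inside $C_k$ (paths exist since $C_k$ is strongly connected), let $X_{k+1}$ be the finite set of all vertices used so far, and invoke the no-rank property of $C_k$ at $X_{k+1}$ to obtain the next strong component $C_{k+1}$, again refining it so it meets all of $\cU$. The only bookkeeping to watch is that the connecting paths meet the beads exactly in their endpoints and are otherwise disjoint from everything previously chosen — which holds automatically because $C_{k+1}$ is a component of $D$ minus $X_{k+1}$, hence disjoint from all earlier material. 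Carrying out this recursion and then checking that the resulting object literally matches the definition of ``inflated symmetric ray with finite branch sets'' (the beads being the branch sets, the two orientations of each connecting path realising the two orientations of each edge of the symmetric ray, possibly after the finitely-often subdivision allowed in the definition) completes the proof.
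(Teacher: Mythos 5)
Your overall strategy coincides with the paper's: exclusivity via Lemma~\ref{lemma: rank and subgraph} together with the fact that a tail of a necklace attached to $\cU$ is again such a necklace, and, when $D$ has no $\cU$-rank, a recursive bead-by-bead construction that keeps a ``rankless'' region ahead of the construction. Your exclusivity half is fine, as is the observation that a strong component without a $\cU$-rank meets every $U\in\cU$ (it would otherwise have rank $0$; you do not really need Lemma~\ref{lemma: rank inf many comps meet U for witness} here). The genuine gap is in the connection step, exactly where you flag ``the main obstacle''. In your bookkeeping $X_{k+1}$ consists of \emph{all} vertices used so far, so the previous bead $B_k$ is among the deleted vertices when $C_k$ is formed; hence $B_k\cap C_k=\emptyset$, and ``paths from $B_k$ to $B_{k+1}$ and back exist since $C_k$ is strongly connected'' is not a justification: strong connectedness of $C_k$ only yields paths between vertices of $C_k$. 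What you need are a $B_k$--$B_{k+1}$ path and a $B_{k+1}$--$B_k$ path whose interiors avoid all previously chosen beads and connecting paths; your disjointness argument (``automatic because $C_{k+1}$ is a component of $D$ minus $X_{k+1}$'') does not apply to these paths, since they do not lie in $C_{k+1}$, and with your order of choices (bead $B_{k+1}$ and rankless component fixed first, connection attempted afterwards) such paths need not exist at all: every route from $B_k$ into the chosen component may be forced through earlier connecting paths.

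The paper's recursion is organised precisely so that this step is automatic, and the difference is what gets deleted and in which order the choices are made. Keep the \emph{current} bead out of the deleted set: with $X_n=H_n\setminus Y_n$, the invariant is that the strong component $C$ of $D-X_n$ containing the current bead $Y_n$ has no $\cU$-rank; being a strong component of $D-X_n$, this $C$ avoids all earlier material except $Y_n$. Inside $C$ one first picks a rankless strong component $C'$ of $C-Y_n$, then a path $P$ from $Y_n$ to $C'$ and a path $Q$ from $C'$ to $Y_n$ inside $C$ (they exist because $C$ is strongly connected, and they avoid earlier material because $C$ does), and only \emph{then} chooses the next bead $Y_{n+1}\subseteq C'$ so that it contains the endvertices of $P$ and $Q$ in $C'$ as well as one vertex of each $U\in\cU$. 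Choosing the bead after the paths is what lets it absorb their endpoints, and placing it inside $C'$ propagates the invariant: the strong component of $D-X_{n+1}$ containing $Y_{n+1}$ contains $C'$, hence has no $\cU$-rank by Lemma~\ref{lemma: rank and subgraph}. If you reorganise your recursion along these lines -- and also make sure that the forward and backward connecting paths between consecutive beads are two internally disjoint paths (a directed path does not come with its reverse orientation, so ``the two orientations of each connecting path'' in your last sentence is not the right picture) -- your argument goes through and is then essentially the paper's proof.
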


\begin{proof}
Let us start by showing that not both statements hold at the same time. Suppose for a contradiction there is a digraph $D$ that has a $\cU$-rank and contains a necklace attached to $\cU$ as a subdigraph. Then, by Lemma~\ref{lemma: rank and subgraph}, every necklace $N \subseteq D$ has a $\cU$-rank. But deleting finitely many vertices from any necklace attached to $\cU$ leaves a strong component that is a necklace attached to $\cU$ by its own. Hence choosing a necklace $N \subseteq D$ attached to $\cU$ with minimal $\cU$-rank results in a contradiction.

In order to prove that at least one of (i) and (ii) holds, let us assume that $D$ has no $\cU$-rank. Then for every $X\in \cX(D)$, the digraph $D-X$ has a strong component that has no $\cU$-rank. In particular, every such strong component contains a vertex---in fact infinitely many---from every set in $\cU$.

We will recursively construct an ascending sequence $(H_{n})_{n \in \N}$ of inflated symmetric paths with finite branch sets, so that $H_n$ extends $H_{n-1}$, by adding an inflated vertex $Y_n$ that meets every set in $\cU$. In order to make the construction work, we will make sure that $Y_n$ is contained in a strong component of $D-X_n$ that has no $\cU$-rank, where $X_n=H_n\setminus Y_n$. The overall union of the $H_{n}$ then gives a necklace attached to $\cU$.

Let $H_0=Y_0$ be a finite strongly connected vertex set that is included in a strong component of $D=D-\emptyset$, that has no $\cU$-rank, and that meets every set in~$\cU$.
Now, suppose that $n\in \N$ and that $H_n$ and $Y_n$ have already been defined.  Let $C$ be the strong component of $D-X_{n}$ that includes $Y_n$. As $C$ has no $\cU$-rank, the digraph $C-Y_n$ has a strong component $C'$ that has no $\cU$-rank. Let $P$ be a path in $C$ from $Y_n$ to $C'$ and $Q$ a path from $C'$ to $Y_n$. Note that $P$ and $Q$ are internally disjoint.
Let $Y_{n+1}\subseteq C'$ be a strongly connected vertex set that contains the last vertex of $P$, the first vertex of $Q$ and one vertex of every set in $\cU$. 
We define $H_{n+1}$ to be the union of $H_n$, $P$, $Q$ and $Y_{n+1}$.
\end{proof}

As our first application of the necklace lemma we describe the connection between Zuther's notion of ends  from \cite{Zuther96Thesis}, which we call pre-ends, with our notion of ends. Two rays or reverse rays $R_1,R_2\subseteq D$ are \emph{equivalent}, if there are infinitely many  disjoint paths from $R_1$ to $R_2$ and infinitely many  disjoint paths from $R_2$ to $R_1$. We call the equivalence classes of this relation the \emph{pre-ends} of $D$.

\begin{lemma}\label{lemma: characterisation end}
Let $D$ be any digraph and $\gamma$ a pre-end of $D$. Then $\gamma$ includes an end $\omega$ of $D$ if and only if $\gamma$ is represented both by a ray and a reverse ray. Moreover, $\omega$ is the unique end of $D$ included in $\gamma$. 
\end{lemma}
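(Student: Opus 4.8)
The plan is to analyse separately the two directions of the equivalence, and then derive the uniqueness clause. Throughout, fix a pre-end $\gamma$ of $D$.

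For the easy direction, suppose $\gamma$ contains an end $\omega$. By definition an end is an equivalence class of \emph{solid} rays, so $\gamma$ contains a ray; and the conventions laid out in the preliminaries attach to every end $\omega$ a reverse ray representing it (a reverse ray $R$ "represents $\omega$" precisely when it shares, with some solid ray representing $\omega$, a tail in the same strong component of $D-X$ for all finite $X$ — and by the equivalence of rays and reverse rays via infinitely many disjoint paths in both directions, such a reverse ray lies in $\gamma$). So $\gamma$ is represented by both a ray and a reverse ray. I would, however, check carefully that "$R$ represents $\omega$" in the sense of the preliminaries implies "$R$ is $\gamma$-equivalent to the solid rays of $\omega$" in the pre-end sense; this should follow because sharing strong components of $D-X$ for every finite $X$ yields, by a standard back-and-forth/fan argument in the relevant strong components, infinitely many disjoint paths in each direction.

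For the substantive direction, assume $\gamma$ is represented by a ray $R$ and a reverse ray $S$, both in $\gamma$, and hence linked by infinitely many disjoint paths in each direction. The goal is to produce a \emph{solid} ray in $\gamma$. Here is where the necklace lemma enters: let $U = V(R)$. I claim $D$ has no $U$-rank. Indeed, if it did, then by Lemma~\ref{lemma: rank and subgraph} (and the definition of rank $0$ failing) there would be a finite $X\in\cX(D)$ such that every strong component of $D-X$ meets $U$ only finitely; but $R$, $S$ and the infinitely many disjoint $R$–$S$ and $S$–$R$ paths let us build arbitrarily long "cycles through $U$" avoiding any fixed finite $X$, forcing a strong component of $D-X$ with infinitely many vertices of $U$ — contradiction. (More precisely: for any finite $X$, pick tails of $R$ and $S$ avoiding $X$ and two disjoint connecting paths avoiding $X$; their union contains a strongly connected set meeting $U$ infinitely, all inside one strong component of $D-X$.) So by the Necklace Lemma~\ref{lemma: necklace lemma}, $D$ contains a necklace $N$ attached to $U=V(R)$, i.e.\ infinitely many beads of $N$ meet $V(R)$. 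A necklace is an inflated symmetric ray, so it contains a genuine ray $R'$ passing through infinitely many of its beads; since infinitely many beads meet $V(R)$, and each bead is finite and strongly connected, one routinely extracts infinitely many disjoint paths from $R'$ to $R$ and from $R$ to $R'$ through these shared beads, so $R'\in\gamma$. Finally $R'$ is solid: for any finite $X$, deleting $X$ destroys only finitely many beads, leaving a sub-necklace that is strongly connected beyond some point and contains a tail of $R'$ — so $R'$ has a tail in a strong component of $D-X$. Thus $R'$ is a solid ray in $\gamma$, and its end $\omega$ is included in $\gamma$.

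For uniqueness, suppose $\omega,\omega'$ are ends both included in $\gamma$. Pick solid rays $R_\omega\in\omega$ and $R_{\omega'}\in\omega'$; both lie in $\gamma$, so there are infinitely many disjoint paths from $R_\omega$ to $R_{\omega'}$ and back. For any finite $X\subseteq V(D)$, all but finitely many of these paths avoid $X$, so appropriate tails of $R_\omega$ and $R_{\omega'}$ lie in a common strong component of $D-X$; hence $C(X,\omega)=C(X,\omega')$ for every $X$, so $R_\omega$ and $R_{\omega'}$ are equivalent as solid rays and $\omega=\omega'$. I expect the main obstacle to be the verification that "no $U$-rank" holds for $U=V(R)$ — i.e.\ packaging the infinitely-many-disjoint-paths hypothesis into the statement that, after deleting any finite $X$, some strong component still meets $V(R)$ infinitely — and, dually, checking that the ray $R'$ extracted from the necklace genuinely lands in $\gamma$ (reconstructing the two-way infinite families of disjoint paths from the shared-beads structure). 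Both are elementary fan-type arguments, but they are the crux of the lemma.
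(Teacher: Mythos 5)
There are two genuine gaps, one in each direction. In your ``easy'' direction you assert that ``the conventions laid out in the preliminaries attach to every end $\omega$ a reverse ray representing it''. They do not: the preliminaries only \emph{define} when a reverse ray represents an end; the \emph{existence} of such a reverse ray is precisely the substantive content of this implication, and the paper proves it via the Necklace Lemma (take a solid ray $R\in\omega$; if $D$ had a $V(R)$-rank, the witnessing set $X$ would, by Lemma~\ref{lemma: rank inf many comps meet U for witness}, force $R$ to meet infinitely many strong components of $D-X$, contradicting solidity; hence there is a necklace attached to $V(R)$, and its reverse ray is pre-end-equivalent to $R$). You only verify the easier step that \emph{if} such a reverse ray exists it lies in $\gamma$, so the forward direction is missing its key argument.

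In the substantive direction your route to ``$D$ has no $V(R)$-rank'' fails. First, having a $\cU$-rank does \emph{not} mean there is a single finite $X$ such that every strong component of $D-X$ meets $U$ only finitely: the rank is defined by transfinite recursion, and e.g.\ a two-level ``star of stars'' has rank $2$ although every finite deletion leaves a component meeting $V(D)$ infinitely. Second, your ``more precisely'' construction uses only \emph{two} connecting paths avoiding $X$, and their union with the two tails yields only a finite closed walk, not a strongly connected set meeting $V(R)$ infinitely; and even the corrected statement ``for every finite $X$ some strong component of $D-X$ meets $V(R)$ infinitely'' is too weak to rule out a $U$-rank, by the same kind of example. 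What one actually needs is that a \emph{tail} of $R$ lies in a single strong component of $D-X$ for every finite $X$, i.e.\ that $R$ is solid -- and this is exactly what the paper proves directly, by taking the union $H$ of $R$, the reverse ray and \emph{all} the infinitely many disjoint connecting paths and observing that $H-X$ always has exactly one infinite strong component; once you have that, the detour through the Necklace Lemma in this direction is unnecessary. (Your uniqueness argument, comparing two solid rays through paths avoiding $X$ that start and end in their tails, is correct and matches the paper's ``moreover'' part in spirit.)
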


\begin{proof}
Consider any pre-end $\gamma$ of $D$. For the forward implication suppose that $\gamma$ includes an end $\omega$ of $D$. Then there is a ray  $R$ that is solid in $D$ and that represents~$\gamma$. It suffices to find a necklace that is attached to $U:= V(R)$. Indeed, every necklace $N$ contains a ray and a reverse ray and if $N$ is attached to $R$ then these rays must be equivalent to $R$. 

So suppose for a contradiction that there is no such necklace. Then by the necklace lemma applied to $U$ in $D$, the digraph $D$ has a $U$-rank, say $\alpha$. Let $X\subseteq V(D)$ be a finite vertex set that witnesses that the $U$-rank of $D$ is $\alpha$. As $U\subseteq V(D)$ is infinite, we have $\alpha>0$. Now, it follows by Lemma~\ref{lemma: rank inf many comps meet U for witness} that the ray $R$ meets infinitely many strong components of $D-X$. We conclude that $R$ has no tail in any strong component of $D-X$ contradicting that $R$ is solid in $D$.

For the backward implication we assume that $\gamma$ is represented by a ray and a reverse ray. We prove that every ray $R$ that represents $\gamma$ is solid in $D$. So let $R$ be any ray that represents $\gamma$ and let $R'$ be a reverse ray that represents $\gamma$. As $R$ and $R'$ are equivalent we find a path system $\mathcal{P}$ that consists of infinitely many pairwise disjoint paths from $R$ to $R'$ and infinitely many pairwise disjoint paths from $R'$ to~$R$.

The subdigraph $H$ of $D$ that consists of $R$, $R'$ and all the paths in $\mathcal{P}$  has exactly one infinite strong component and finitely many finite strong components (possibly none). Moreover, deleting finitely many vertices from $H$ results again in exactly one infinite strong component and finitely many finite strong components. Consequently, $R$ has a tail that is contained in a strong component of $D-X$ for every finite vertex set $X \subseteq V(D)$.

For the `moreover' part  note that the above argument shows that any ray that represents $\gamma$ has a tail in the same strong component of $D-X$ as the reverse ray $R'$,  for every finite vertex set $X \subseteq V(D)$. Consequently, any two rays that represent $\gamma$ have a tail in the same strong component of $D-X$ for every finite vertex set $X \subseteq V(D)$.
\end{proof}

Our second application of the necklace lemma demonstrates how the rank can be used to prove statements about digraphs that have no end. A set of vertices of a digraph $D$ is \emph{acyclic} in $D$ if its induced subdigraph does not contain a directed cycle. The \emph{dichromatic number} \cite{neumanndichromatic} of a digraph $D$ is the smallest cardinal $\kappa$ so that $D$ admits a vertex partition into $\kappa$ partition classes that are acyclic in~$D$.  As a consequence of the necklace lemma we obtain a sufficient condition for $D$ to have a countable dichromatic number: 
\begin{theorem} If $D$ is a digraph that contains no necklace as a subdigraph, then the dichromatic number of $D$ is countable. 
\end{theorem}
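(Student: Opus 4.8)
The plan is to use the necklace lemma (Lemma~\ref{lemma: necklace lemma}) with $\cU = \{V(D)\}$. Since $D$ contains no necklace as a subdigraph, it certainly contains no necklace attached to $\cU$, so by the complementarity in the necklace lemma, $D$ has a $\cU$-rank; that is, $D$ has a rank in the sense of Schmidt's ranking. I would then prove the theorem by transfinite induction on the rank $\alpha$ of $D$, strengthening the statement slightly to make the induction go through: \emph{every digraph of rank $\alpha$ has an acyclic vertex partition into countably many classes.}

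For the base case, if $D$ has rank $0$, then $V(D)$ is finite (taking $U = V(D)$ in the definition of rank $0$), so $D$ has finitely many vertices, and putting each vertex in its own class gives a finite — hence countable — acyclic partition. For the inductive step, suppose $D$ has rank $\alpha > 0$ and the claim holds for all digraphs of smaller rank. Choose a finite vertex set $X \in \cX(D)$ witnessing the rank, so that every strong component of $D - X$ has rank $< \alpha$. By the induction hypothesis, each strong component $C$ of $D - X$ has an acyclic partition into countably many classes $A^C_1, A^C_2, \dots$. Now for each index $i \in \N$, let $A_i := \bigcup_C A^C_i$ be the union over all strong components $C$ of $D - X$ of their $i$-th class. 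The key point is that $A_i$ is acyclic in $D$: any directed cycle in $D[A_i]$ lies within a single strong component of $D - X$ (since a cycle is strongly connected and $X \cap A_i = \emptyset$), and hence within $D[A^C_i]$ for some $C$, contradicting that $A^C_i$ is acyclic in $C$. Finally we add the finitely many vertices of $X$ as singleton classes. This yields an acyclic partition of $V(D)$ into countably many classes ($|\N| + |X| = \aleph_0$), completing the induction.

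The main obstacle — really the only subtle point — is making sure the inductive statement is phrased so that the union over the strong components of $D - X$ does not blow up the number of classes: one must take the union \emph{index by index} rather than naively collecting all classes of all components, which would give countably many components times countably many classes each. Because a fixed countable reindexing $\N$ works simultaneously for all components, the total stays countable. One should also note that there may be infinitely many strong components of $D-X$, but this causes no difficulty since we only ever take a countable union of countable families indexed by the same countable set $\N$, and we separately handle the finite set $X$. Everything else — that a directed cycle cannot cross between distinct strong components of $D-X$ nor pass through $X$ when confined to an $A_i$ — is immediate from the definitions.
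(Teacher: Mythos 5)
Your proposal is correct and follows essentially the same route as the paper: invoke the necklace lemma to obtain a rank, then do transfinite induction on the rank, merging the acyclic classes of the strong components of $D-X$ index by index and adding the vertices of $X$ as singletons. The point you flag as the only subtle one---that a directed cycle avoiding $X$ is strongly connected and hence lies in a single strong component of $D-X$, so the indexwise unions stay acyclic---is exactly the observation the paper relies on as well.
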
 
\begin{proof}
By the necklace lemma, the statement that $D$ contains no necklace as a subdigraph is equivalent to the statement that $D$ has a rank. Therefore we can apply induction on the rank of $D$. The vertex set of a finite digraph clearly has a partition into finitely many singleton---and thus acyclic---partition classes, which settles the base case. Now assume that $D$ has rank $\alpha>0$ and that the statement is true for all ordinals $<\alpha$. We find a finite vertex set $X\subseteq V(D)$ such that every strong component of $D-X$ has some rank~$<\alpha$. Hence the induction hypothesis yields a partition $\{\, V_{n}(C)\mid n\in \N\, \}$ of every strong component $C$ of $D-X$ into acyclic partition classes. For every $n\in\N$, let $V_n$ consist of the union of all the sets $V_n(C)$ with $C$ a strong component of $D-X$. Note that $V_n$ is acyclic in $D$. Combining a partition of $X$ into singleton partition classes with the partition $\{\, V_n\mid n\in\N\,\}$ of $V(D-X)$ completes the induction step.  
\end{proof}

\section{Directions}\label{section: directions} 
\noindent In this section we will prove our main result. To state it properly we need two definitions. A direction of a digraph $D$ is a map $f$ with domain $\cX(D)$ that sends every $X \in \cX(D)$ to a strong component or a bundle of $D-X$ so that $f(X)\supseteq f(Y)$ whenever $X\subseteq Y$. We call a direction $f$ of $D$ a \emph{vertex-direction} if $f(X)$ is a strong component of $D-X$ for every $X \in \cX(D)$. 

Every end of $D$ naturally defines a direction $f_\omega$ which maps every finite vertex set $X\subseteq V(D)$ to the unique strong component of $D-X$ in which every ray that represents $\omega$ has a tail. Now, our first main theorem reads as follows: 

\begin{mainresult}\label{thm: directions} 
Let $D$ be any infinite digraph. The map $\omega \mapsto f_\omega$ with domain $\Omega(D)$ is a bijection between the ends and the vertex-directions of $D$. 
\end{mainresult}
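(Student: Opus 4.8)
The plan is to follow the well-known strategy for the undirected direction theorem of Diestel and K\"{u}hn, adapted to the directed setting via the Necklace Lemma. Injectivity is the easy half: if $\omega$ and $\eta$ are distinct ends, pick a finite set $X$ separating them, so that $C(X,\omega)\neq C(X,\eta)$, whence $f_\omega(X)\neq f_\eta(X)$ and $f_\omega\neq f_\eta$. One should also check that each $f_\omega$ really is a vertex-direction, i.e.\ that it is consistent: if $X\subseteq Y$ then a solid ray representing $\omega$ has a tail in $C(Y,\omega)$, and this tail also lies in $C(X,\omega)$ since $C(Y,\omega)$ is a strongly connected subdigraph of $D-X$, so it sits inside one strong component of $D-X$; hence $C(X,\omega)\supseteq C(Y,\omega)$.

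For surjectivity, fix a vertex-direction $f$. The goal is to produce a solid ray $R$ with $f_\omega=f$ for its end $\omega$, i.e.\ with a tail in $f(X)$ for every $X\in\cX(D)$. First I would extract a candidate vertex set on which to run the Necklace Lemma: choose a strictly increasing sequence $X_0\subseteq X_1\subseteq\cdots$ of finite vertex sets that is \emph{cofinal} in the sense that is needed — most simply, pick for each $n$ some $X_n$ so that $f(X_n)$ properly refines $f(X_{n-1})$ whenever that is possible, pick one vertex $u_n$ from each $f(X_n)$, and set $U=\{u_n : n\in\N\}$ (the point being that $U$ meets $f(X)$ for every $X$, because the sets $f(X_n)$ form a decreasing chain that is coinitial among the $f(X)$'s after suitable reindexing). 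Then apply the Necklace Lemma to $U$ in $D$. The key claim is that $D$ has \emph{no} $U$-rank, so that alternative (i) holds and we obtain a necklace $N\subseteq D$ attached to $U$; a necklace contains a (solid) ray, and I would then argue that this ray represents the end $\omega$ with $f_\omega=f$.

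To see that $D$ has no $U$-rank, suppose it did, with rank $\alpha$, witnessed by a finite set $X$. Since $U$ is infinite, $\alpha>0$, and by Lemma~\ref{lemma: rank inf many comps meet U for witness} infinitely many strong components of $D-X$ meet $U$. But $f(X)$ is a single strong component of $D-X$, and by consistency of $f$ together with the way $U$ was chosen, $U$ is (essentially) contained in $f(X)$ up to a finite set — more precisely, all but finitely many of the $u_n$ lie in $f(X_n)\subseteq f(X)$ once $X_n\supseteq X$ — contradicting that infinitely many distinct strong components meet $U$. So no $U$-rank exists, the Necklace Lemma gives a necklace $N$ attached to $U$, and the ray $R$ inside $N$ is solid (its tails are strongly connected, and by Lemma~\ref{lemma: characterisation end}-type reasoning, or directly, it has a tail in a strong component of $D-X$ for every finite $X$). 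Finally, for any $X\in\cX(D)$, the tail of $R$ past $X$ lies in some strong component $C$ of $D-X$ meeting $U$ in infinitely many points, hence meeting $f(X_n)\subseteq f(X)$ for large $n$; so $C=f(X)$ and $R$ has a tail in $f(X)$, giving $f_\omega=f$ for $\omega$ the end of $R$.

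\textbf{Main obstacle.} The delicate point is the bookkeeping in the choice of $U$: one must ensure simultaneously that $U$ meets $f(X)$ for \emph{every} finite $X$ (so that the necklace produced is genuinely "aimed at" $f$) and that $U$ is, up to finitely many vertices, trapped inside each $f(X)$ (so that the no-$U$-rank argument via Lemma~\ref{lemma: rank inf many comps meet U for witness} goes through). The cleanest way is probably to enumerate $\cX(D)$ — or a cofinal subfamily — as $X_0\subseteq X_1\subseteq\cdots$, note that $f(X_0)\supseteq f(X_1)\supseteq\cdots$, pick $u_n\in f(X_n)$, and observe that for any finite $X$ there is an $n$ with $X\subseteq X_n$, so $u_m\in f(X_m)\subseteq f(X_n)\subseteq f(X)$ for all $m\geq n$; this single observation delivers both required properties. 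One should also be slightly careful that a countable cofinal family of finite sets suffices here, which it does because every finite $X$ is contained in some $X_n$ by construction — if $\cX(D)$ has no countable cofinal subfamily one instead works with the increasing union $\bigcup_n X_n$ directly and argues that it is enough to control the $X_n$. After that, the verification that the necklace's ray induces $f$ is routine.
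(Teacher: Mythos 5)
Your argument is sound as far as it goes, but it only goes as far as countable digraphs, and that is precisely the easy case. The entire construction of $U$ rests on choosing an increasing sequence $X_0\subseteq X_1\subseteq\cdots$ of finite vertex sets that is \emph{cofinal} in $\cX(D)$, so that every finite $X$ lies in some $X_n$; this is what simultaneously gives you ``all but finitely many $u_n$ lie in $f(X)$'' (needed both for the no-$U$-rank contradiction via Lemma~\ref{lemma: rank inf many comps meet U for witness} and for the final identification $C=f(X)$). But if $V(D)$ is uncountable, $\cX(D)$ has no countable cofinal subfamily at all: the union of any countable chain of finite sets is countable, so it misses some vertex $v$, and $X=\{v\}$ is contained in no $X_n$. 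Your fallback remark---``work with $\bigcup_n X_n$ directly and argue that it is enough to control the $X_n$''---is exactly where the missing idea sits: controlling an end $\omega$ only on a non-cofinal countable chain does not determine $f$, and knowing $C(X_n,\omega)=f(X_n)$ for all $n$ does not let you conclude $C(X,\omega)=f(X)$ for an arbitrary finite $X$ (you would need $f(X\cup X_n)=C(X\cup X_n,\omega)$ for some $n$, which is not available). So for uncountable $D$ the proposal, as written, does not produce an end inducing $f$, and no routine patch of the bookkeeping fixes this; some genuinely new input is required.

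For countable $D$ your route is correct and in fact shorter than the paper's: with $X_n$ an enumeration-based cofinal chain, picking $u_n\in f(X_n)$ and running the Necklace Lemma on $U=\{u_n\}$ does yield a necklace whose forward ray represents an end $\omega$ with $f_\omega=f$ (your tail-of-the-necklace argument is fine). The paper's proof is built precisely to survive the uncountable case: it distinguishes whether the direction $f$ is dominated and reverse dominated (the sets $S_1^*$, $S_2^*$), uses Lemma~\ref{lemma: separations pointing to f and supremum} to form infima of finite-order separations pointing away from $f$, and in the undominated case Lemma~\ref{lemma: sequence of separations} manufactures a sequence of disjoint separators whose role is exactly that of your missing cofinal chain: it produces a countable set $U$ such that $f$ is the \emph{unique} vertex-direction in the closure of $U$, after which Lemma~\ref{lemma: connection closure; direction; necklace} finishes as in your argument; the dominated case is handled separately via the closure of $\{S_1^*,S_2^*\}$. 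If you want to salvage your write-up, either restrict the claim to countable digraphs or supply a substitute for cofinality along the lines of this domination dichotomy.
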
 
\noindent The proof of this needs some preparation. Let $D$ be any digraph and let $\cU$ be a set of vertex sets of $D$. We say that an end $\omega$ of $D$ is contained in the  \emph{closure} of $\cU$ if $C(X,\omega)$ meets every vertex set in $U\in\cU$ for every finite vertex set $X\subseteq V(D)$. In the second paper~\cite{EndsOfDigraphsII} of this series we will define a topology on the space $|D|$ formed by $D$ together with its ends and limit edges and in this topology an end $\omega$ will be in the closure of $\cU$ if and only if it is in the topological closure of every set in $\cU$. Note that an end $\omega$ is contained in the closure of the vertex set of a ray $R$ if and only if $R$ represents $\omega$.

Similarly, we say that a vertex-direction $f$ of $D$ is contained in the \emph{closure} of~$\cU$, if $f(X)$ meets every $U\in\cU$ for every $X\in \cX(D)$. Note that if $f$ is contained in the closure of $\cU$, then $f(X)$ meets every $U\in \cU$ in an infinite vertex set. The following lemma describes the connection between ends in the closure of $\cU$, vertex-directions in the closure of $\cU$ and necklaces attached to $\cU$:

\begin{lemma} \label{lemma: connection closure; direction; necklace} Let $D$ be any digraph, and let $\cU$ be a finite set of vertex sets of $D$. Then the following assertions are equivalent:
\begin{enumerate}
    \item $D$ has an end in the closure of $\cU$;
    \item $D$ has a vertex-direction in the closure of $\cU$;
    \item $D$ has a necklace attached to $\cU$.
\end{enumerate}
\end{lemma}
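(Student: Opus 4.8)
The plan is to prove the cycle of implications $(iii) \Rightarrow (i) \Rightarrow (ii) \Rightarrow (iii)$, since $(iii) \Rightarrow (i)$ is the substantive content (it produces an end from a combinatorial object) while the other two implications are essentially unpacking definitions together with the Necklace Lemma.

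For $(iii) \Rightarrow (i)$: given a necklace $N \subseteq D$ attached to $\cU$, I would first observe that $N$ contains a solid ray. Indeed, $N$ is an inflated symmetric ray with finite beads, so it contains a ray $R$ that runs through infinitely many beads; since each bead is strongly connected and the beads are linked in both directions along the necklace, for every finite $X \in \cX(D)$ all but finitely many beads of $N$ lie in a single strong component of $N - X$, and $R$ has a tail there — so $R$ is solid in $D$ and defines an end $\omega$. It then remains to check $\omega$ lies in the closure of $\cU$, i.e.\ that $C(X,\omega)$ meets every $U \in \cU$ for each finite $X$. This holds because $C(X,\omega) \supseteq$ (the strong component of $N-X$ containing the tail of $R$), which contains all but finitely many beads of $N$; since $N$ is attached to $\cU$, infinitely many beads meet each $U \in \cU$, so in particular some bead disjoint from $X$ does, giving a vertex of $U$ in $C(X,\omega)$.

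For $(i) \Rightarrow (ii)$: if $\omega$ is an end in the closure of $\cU$, then $f_\omega$ is a vertex-direction (as recalled just before Theorem~\ref{thm: directions}), and by definition $f_\omega(X) = C(X,\omega)$ meets every $U \in \cU$ for every $X$, so $f_\omega$ is in the closure of $\cU$. For $(ii) \Rightarrow (iii)$: suppose $f$ is a vertex-direction of $D$ in the closure of $\cU$. I want to apply the Necklace Lemma, so it suffices to show $D$ has no $\cU$-rank; then Lemma~\ref{lemma: necklace lemma} gives a necklace attached to $\cU$. Suppose for contradiction $D$ had a $\cU$-rank $\alpha$. If $\alpha = 0$ then some $U \in \cU$ has $U \cap V(D)$ finite, but $f(\emptyset)$ meets $U$ in an infinite set (as noted in the paragraph before the lemma), a contradiction. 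If $\alpha > 0$, take $X \in \cX(D)$ witnessing the rank, so every strong component of $D - X$ has $\cU$-rank $< \alpha$; but $f(X)$ is such a strong component, and by Lemma~\ref{lemma: rank inf many comps meet U for witness} it meets every set in $\cU$ — yet a strong component meeting every $U \in \cU$ together with the witnessing sets inside it would push the rank of $D$ below $\alpha$ (this is exactly the argument of Lemma~\ref{lemma: rank inf many comps meet U for witness}'s proof), so more directly: $f(X)$ has a $\cU$-rank $\beta < \alpha$, hence a witnessing $X' \in \cX(f(X))$, and then $f(X \cup X')$ is a strong component of $D - (X \cup X')$ contained in a strong component of $f(X) - X'$, hence of $\cU$-rank $< \beta$; iterating along the ordinals $\alpha > \beta > \cdots$ yields an infinite strictly descending sequence of ordinals, a contradiction. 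Thus $D$ has no $\cU$-rank, and we are done.

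The main obstacle I anticipate is the first step of $(iii) \Rightarrow (i)$: carefully verifying that a necklace contains a \emph{solid} ray and that the end it represents is genuinely in the closure of $\cU$ (rather than just that the ray runs through many beads). This requires pinning down the structure of an inflated symmetric ray — that deleting a finite vertex set leaves exactly one infinite strong component, namely the "tail" of the necklace beyond the deleted part — which follows from the finiteness of the beads and the two-way linkage, but should be spelled out. The other implications are routine once the Necklace Lemma and the two rank lemmas are invoked.
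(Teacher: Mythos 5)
Your proof is correct and follows essentially the same route as the paper: the cycle (i)$\rightarrow$(ii)$\rightarrow$(iii)$\rightarrow$(i), with (ii)$\rightarrow$(iii) reduced via the Necklace Lemma to showing $D$ has no $\cU$-rank. The only cosmetic difference is that the paper picks $X'$ with $f(X')$ of minimal $\cU$-rank and derives a one-step contradiction, whereas you run an infinite descent on ordinals (noting, as the paper does, that $f(X)$ meets every $U\in\cU$ infinitely, so each $f(X)$ has $\cU$-rank at least $1$ and a witnessing set exists at every step).
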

 
\begin{proof}  (i)$\rightarrow$(ii): Let $\omega$ be any end in the closure of $\cU$. It is straightforward to check that $f_\omega$ is a vertex-direction in the closure of $\cU$. 

(ii)$\rightarrow$(iii): Suppose that $f$ is a vertex-direction in the closure of $\cU$. We need to find a necklace attached to $\cU$. By the necklace lemma we may equivalently show that $D$ has no $\cU$-rank. Suppose for a contradiction that $D$ has a $\cU$-rank $\alpha$. By Lemma~\ref{lemma: rank and subgraph} subdigraphs of digraphs that have a $\cU$-rank have a $\cU$-rank and thus we may choose $X'$ such that $f(X')$ has the smallest $\cU$-rank among all $f(X)$ with $X\in \cX(D)$. Note that $f(X)$ has $\cU$-rank $\geq 1$ for every $X\in \cX(D)$. Indeed, if $f(X)\cap U$ is finite for some $U\in \cU$, then $$f(X\cup (f(X)\cap U))\cap U= \emptyset$$ contradicting that $f$ is a vertex-direction in the closure of $\cU$. Hence we find a finite vertex set $X^{\prime \prime}\subseteq f(X^\prime)$ such that all strong components of $f(X^\prime) - X^{\prime \prime}$ have $\cU$-rank less than that of $f(X^\prime)$. But then $X'\cup X''$ would have been a better choice for $X'$.

(iii)$\rightarrow$(i): Given a necklace $N$ attached to $U$, let $R\subseteq N$ be a ray. Then $R$ is solid in $D$. It is straightforward to show that the end  that is represented by $R$ is contained in the closure of~$\cU$. 
\end{proof}

Let $D$ be any digraph and let $f$ be any vertex-direction of $D$. We think of a separation $(A,B)$ of $D$ as pointing towards its side $B$. Now, if $(A,B)$ is a finite order separation of $D$, then $f(A\cap B)$ is either included in $B\setminus A$ or $A\setminus B$. In the first case we say that $(A,B)$ \emph{points towards} $f$ and in the second case we say that $(A,B)$ \emph{points away from} $f$. Note that the supremum or infimum of two finite order separations is again a finite order separation. If two separations  point towards or away from $f$, then the same is true for their supremum or infimum, respectively:

\begin{lemma}\label{lemma: separations pointing to f and supremum} Let $D$ be any digraph and let $f$ be a vertex-direction of $D$. Suppose that  $(A_1,B_1)$ and $(A_2,B_2)$ are finite order separations of $D$.  \begin{enumerate}
    \item If $(A_1,B_1)$ and $(A_2,B_2)$ point towards $f$, then $(A_1\cup A_2,B_1\cap B_2)$ points towards $f$.
    \item If $(A_1,B_1)$ and $(A_2,B_2)$ point away from $f$, then $(A_1\cap A_2,B_1\cup B_2)$ points away from $f$.
\end{enumerate}
\end{lemma}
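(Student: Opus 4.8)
The plan is to reduce both parts to two ingredients: the monotonicity $f(X)\subseteq f(Y)$ for $Y\subseteq X$ that is built into the definition of a vertex-direction, and the elementary fact that a strong component of $D-Z$ can never straddle the separator of a finite-order separation whose separator is $Z$.

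First I would record that side observation. If $(A,B)$ is a separation, then there is no edge from $B\setminus A$ to $A\setminus B$; hence no directed path in $D-(A\cap B)$ runs from a vertex of $B\setminus A$ to a vertex of $A\setminus B$, since taking the last vertex of such a path that lies in $B\setminus A$, its successor would lie in $A\setminus B$ and the connecting edge would be forbidden. Consequently every strongly connected subdigraph of $D-(A\cap B)$---in particular every strong component---has its vertex set entirely inside $A\setminus B$ or entirely inside $B\setminus A$. Thus, for a finite-order separation, ``$(A,B)$ points towards $f$'' is exactly the assertion $f(A\cap B)\subseteq B\setminus A$, and ``$(A,B)$ points away from $f$'' is exactly $f(A\cap B)\subseteq A\setminus B$. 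I would also recall here, as the excerpt already notes, that the supremum and infimum of finite-order separations are again finite-order separations, so that ``points towards/away'' is meaningful for $(A_1\cup A_2,B_1\cap B_2)$ and $(A_1\cap A_2,B_1\cup B_2)$.

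For (i), put $X:=(A_1\cap B_1)\cup(A_2\cap B_2)\in\cX(D)$. Since $A_i\cap B_i\subseteq X$ for $i=1,2$, monotonicity gives $f(X)\subseteq f(A_1\cap B_1)\cap f(A_2\cap B_2)$, and the hypothesis that both separations point towards $f$ yields $f(X)\subseteq (B_1\setminus A_1)\cap(B_2\setminus A_2)\subseteq (B_1\cap B_2)\setminus(A_1\cup A_2)$. Now the separator of the supremum $(A_1\cup A_2,B_1\cap B_2)$ is $S:=(A_1\cup A_2)\cap(B_1\cap B_2)$, and $S\subseteq X$, so $f(X)\subseteq f(S)$. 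Here $f(S)$ is a strong component of $D-S$, hence nonempty, and by the side observation applied to the separation $(A_1\cup A_2,B_1\cap B_2)$ it lies entirely on one of its two sides; since it contains the nonempty set $f(X)$, which is contained in $(B_1\cap B_2)\setminus(A_1\cup A_2)$, that side is $B_1\cap B_2$. Hence $(A_1\cup A_2,B_1\cap B_2)$ points towards $f$.

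Part (ii) is symmetric. With the same $X$, monotonicity and the hypothesis give $f(X)\subseteq (A_1\setminus B_1)\cap(A_2\setminus B_2)\subseteq(A_1\cap A_2)\setminus(B_1\cup B_2)$, and since the separator $S':=(A_1\cap A_2)\cap(B_1\cup B_2)$ of the infimum $(A_1\cap A_2,B_1\cup B_2)$ is again contained in $X$, the strong component $f(S')\supseteq f(X)$ must lie in $A_1\cap A_2$, so the infimum points away from $f$. I do not expect a real obstacle in this lemma; the only things demanding care are the side observation (that strong components respect separators) and getting the direction of the set inclusions such as $(B_1\setminus A_1)\cap(B_2\setminus A_2)\subseteq(B_1\cap B_2)\setminus(A_1\cup A_2)$ right.
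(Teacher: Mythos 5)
Your proof is correct, and it is a genuinely leaner route than the paper's. The paper proves (i) by introducing the auxiliary separation $(A,B')$ with $B'=X'\cup B$, where $X'=(A_1\cap B_1)\cup(A_2\cap B_2)$, and then makes two observations -- that $f(X')\subseteq B'\setminus A$, and that the strong components of $D-X$ partitioning $B\setminus A$ are exactly those of $D-X'$ meeting $B'\setminus A$ -- in order to conclude the equality $f(X')=f(X)$ and hence $f(X)\subseteq B\setminus A$. You work with the same two vertex sets (your $X$ is the paper's $X'$, your $S$ the paper's $X$), but you never need the equality $f(S)=f(X)$ or the component-correspondence observation: it suffices that $f(S)\supseteq f(X)\neq\emptyset$ with $f(X)\subseteq(B_1\cap B_2)\setminus(A_1\cup A_2)$, together with the dichotomy that a strong component of $D-S$ lies entirely in one side of the separation $(A_1\cup A_2,B_1\cap B_2)$. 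That dichotomy is exactly the fact the paper assumes implicitly when it defines ``points towards/away'', and you supply the short last-vertex argument for it, which makes your write-up self-contained. You also prove (ii) by the symmetric direct computation rather than by passing to the reverse digraph as the paper does; this avoids having to track how separations and the notions of pointing towards/away transform under reversal. The set-theoretic inclusions you flag ($S\subseteq X$, $S'\subseteq X$, and $(B_1\setminus A_1)\cap(B_2\setminus A_2)\subseteq(B_1\cap B_2)\setminus(A_1\cup A_2)$ and its analogue) all check out, so no gap remains.
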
 

\begin{proof} (i) We have to show that for $(A,B):=(A_1\cup A_2,B_1\cap B_2)$ and $X=A\cap B$ the strong component $f(X)$ is included in $B\setminus A$. For this let us consider the auxiliary separation $(A,B'):=(A,X'\cup B)$, where $X':= \bigcup_{i=1,2} A_i\cap B_i$ (cf. Figure~\ref{fig:separations}). Recall that the separator of a separation separates its two sides. Hence the vertex set $B\setminus A$ is partitioned into the strong components of $D-X$ that it meets. 

\begin{figure}[ht]
\centering
\def\svgwidth{7cm}
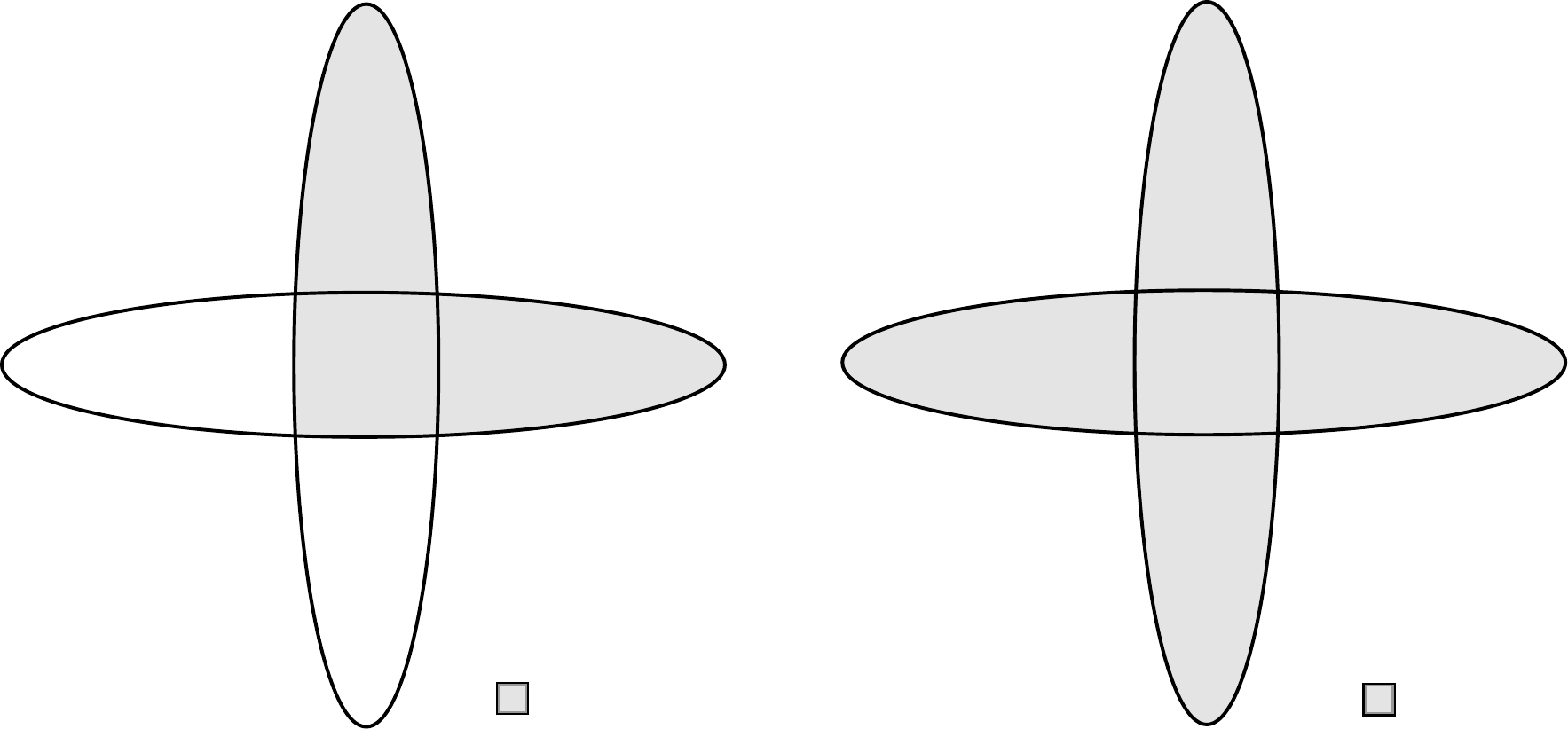
\caption{The separations $(A,B)$ and $(A,B')$  from the proof of Lemma~\ref{lemma: separations pointing to f and supremum}.}
\label{fig:separations}
\end{figure}

First, we observe that $(A,B')$ points towards $f$, a fact that we verify as follows: Since $A_i\cap B_i\subseteq X'$ and because $(A_i,B_i)$ points towards $f$ we have $$f(X')\subseteq f(A_i\cap B_i)\subseteq  B_i$$ for $i=1,2$. Hence $f(X')\subseteq B_1\cap B_2=B$. Now, $f(X')$ avoids $X'$ because it is a strong component of $D-X'$, giving $f(X')\subseteq B\setminus X'$. As $B\setminus X'=B'\setminus A$, we conclude that $f(X')$ is included in $B'\setminus A$.

Second, we observe that the strong components of $D-X$ that partition $B\setminus A$ are exactly the strong components of $D-X'$ that meet $B'\setminus A$; the reason for this is that $B'$ is obtained from $B$ by adding only vertices from $A\setminus B$.

Finally, we employ the two observation in order to prove that $(A,B)$ points towards $f$. Indeed, since $X\subseteq X'$ and because $f$ is a vertex-direction, we have that $f(X')\subseteq f(X)$.  Now, the first observation says that $f(X')$ is included in $B'\setminus A$. Together with the second observation we obtain $f(X')=f(X)$. So the equation $B'\setminus A=B\setminus A$ yields $f(X)\subseteq B\setminus A$ as desired. 

(ii) Apply (i) to the reverse of $D$.
\end{proof}

Recall, that for a given undirected graph $G$ a vertex $v$ is said to \emph{dominate} an end $\omega$ of $G$ if there is an infinite $v$--$R$ fan in $G$ for some (equivalently every) ray~$R$ that represents $\omega$. Equivalently $v$ dominates $\omega$ if $v$ is contained in $ C(X,\omega)$ for every finite vertex set $X\subseteq V(G)\setminus\{v\}$. An end $\omega\in \Omega(G)$ is \emph{dominated} if some vertex of $G$ dominates it.
Ends not dominated by any vertex of $G$ are \emph{undominated}, see~\cite{DiestelBook5}. The main case distinction in the proof of Diestel and K\"uhn's theorem \cite[Theorem~2.2]{Diestel_Kuehn_Directions_03}, which states that the ends of an undirected graph correspond bijectively to its directions, essentially distinguishes between directions that correspond to dominated ends and those that correspond to undominated ends. Our plan is to make a similar case distinction for which we need a concept of domination for ends of digraphs.

Let $D$ be any digraph. For a vertex $a\in V(D)$ and $B\subseteq V(D)$ a set of $a$--$B$ paths in $D$ is called an $a$--$B$ \emph{fan} if any two of the paths meet precisely in $a$. Similarly, a set of $B$--$a$ paths in $D$ is called an $a$--$B$ \emph{reverse fan} if any two of the paths meet precisely in $a$.  We say that a vertex $v\in V(D)$ \emph{ dominates} a ray $R\subseteq D$ if there is an infinite  $v$--$R$ fan in $D$. The vertex $v$ dominates an end $\omega \in \Omega(D)$ if it dominates some (equivalently every) ray that represents $\omega$.  Similarly, a vertex $v\in V(D)$ \emph{reverse dominates} a ray $R\subseteq D$ if $D$ contains a $v$--$R$ reverse fan. The vertex $v$ \emph{reverse dominates} an end $\omega \in \Omega(D)$ if it reverse dominates some (equivalently every) ray that represents $\omega$. An end of $D$ is \emph{dominated} or \emph{reverse} \emph{dominated} if some vertex dominates or reverse dominates it, respectively. 

Now, we translate the concept of domination and reverse domination to vertex-directions of digraphs. A vertex $v\in V(D)$ \emph{dominates} a vertex-direction $f$ in $D$, if $v\in A$ for every finite order separation $(A,B)$ of $D$ that points away from $f$. If $f$ is dominated by some vertex, then it is \emph{ dominated}. Similarly, $v$ \emph{reverse dominates} $f$ if $v\in B$ for every finite order separation $(A,B)$ of $D$ that points towards $f$. If $f$ is reverse dominated by some vertex, then $f$ is \emph{reverse dominated}. The following proposition shows that our translation of the concept forwards and reverse domination to vertex-directions of digraphs is accurate:
\begin{proposition}
Let $D$ be any digraph and $\omega$ an end of $D$. A vertex (reverse) dominates $\omega$ if and only if it (reverse) dominates~$f_\omega$.
\end{proposition}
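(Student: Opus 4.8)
The plan is to prove the four implications in a natural loop, translating between the ray/fan description of domination and the separation description. By symmetry (apply everything to the reverse of $D$, using that reverse fans in $D$ are fans in the reverse of $D$ and that a separation $(A,B)$ points towards $f$ in $D$ iff $(B,A)$ points away from $f_{\text{rev}}$ in the reverse of $D$), it suffices to treat the non-reverse statement: a vertex $v$ dominates $\omega$ if and only if $v$ dominates $f_\omega$.

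First I would prove: if $v$ dominates $\omega$, then $v$ dominates $f_\omega$. Let $(A,B)$ be a finite order separation of $D$ pointing away from $f_\omega$, so $f_\omega(A\cap B)=C(A\cap B,\omega)\subseteq A\setminus B$. Suppose for contradiction $v\notin A$, so $v\in B\setminus A$. Fix a ray $R$ representing $\omega$; since $\omega$ lives in $C(A\cap B,\omega)\subseteq A\setminus B$, some tail of $R$ lies in $A\setminus B$. Because $v$ dominates $R$ there is an infinite $v$--$R$ fan, and infinitely many of its paths end on that tail of $R$, hence run from $v\in B\setminus A$ into $A\setminus B$; each such path must meet the separator $A\cap B$. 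As the paths meet only in $v\notin A\cap B$, this yields infinitely many vertices in the finite set $A\cap B$, a contradiction. Hence $v\in A$, so $v$ dominates $f_\omega$.

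Conversely I would prove: if $v$ dominates $f_\omega$, then $v$ dominates $\omega$. Fix a solid ray $R$ representing $\omega$; we build an infinite $v$--$R$ fan greedily. Suppose $P_1,\dots,P_k$ are disjoint $v$--$R$ paths meeting only in $v$; let $Y$ be the (finite) union of their vertex sets together with finitely many initial vertices of $R$, chosen so that the tail $R'$ of $R$ beyond $Y$ lies in $C(Y',\omega)$ for $Y':=Y\setminus\{v\}$. The key step is to produce one more $v$--$R'$ path avoiding $Y\setminus\{v\}$. If no such path existed, then the set $A$ of vertices reachable from $v$ in $D-(Y\setminus\{v\})$ together with the rest would give a finite order separation $(A,B)$ with separator inside $Y\setminus\{v\}$, with $v\in A\setminus B$ and $C(Y\setminus\{v\},\omega)\subseteq B$; one checks this separation points away from $f_\omega$ yet $v\notin A$ in the relevant sense — more carefully, take $A$ to be the up-closure of $v$ under reachability in $D-(Y\setminus\{v\})$ and $B=V(D)\setminus(A\setminus (Y\setminus\{v\}))$, so that $(A,B)$ is a finite order separation pointing away from $f_\omega$ (since $f_\omega(Y\setminus\{v\})=C(Y\setminus\{v\},\omega)$ contains a tail of $R$ but, by assumption, is not reachable from $v$, hence lies in $A\setminus B$ is impossible — so it lies in $B\setminus A$, meaning $(A,B)$ points away from $f_\omega$) while $v\in A$ fails to be forced...

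Let me restate that last implication more cleanly, since the direction of reachability needs care. I would instead argue: if $v$ does \emph{not} dominate $\omega$, exhibit a finite order separation pointing away from $f_\omega$ with $v\notin A$. Since $v$ does not dominate $R$, by a directed Menger-type / fan argument there is a finite set $S\subseteq V(D)\setminus\{v\}$ separating $v$ from a tail of $R$ in the sense that every $v$--$R'$ path meets $S$, where $R'$ is a suitable tail (this is the standard fact that failure of an infinite fan yields a finite separator; one applies the version of Menger's theorem for digraphs to the paths from $v$ to $V(R')$). Enlarge $S$ to a finite set $X\supseteq S$ with $v\notin X$ such that $R$ has a tail in $C(X,\omega)$. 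Let $A$ be $\{v\}$ together with all vertices lying on some $v$--$R$ path before the first hit of $X$, plus $X$ — concretely set $A = X \cup \{\,u : \text{there is a } v\text{--}u \text{ path internally avoiding } X\,\}$ and $B = (V(D)\setminus A)\cup X$. Then $(A,B)$ has separator $\subseteq X$, so has finite order; there is no edge from $B\setminus A$ to $A\setminus B$ by construction; and $C(X,\omega)\subseteq B\setminus A$ since a tail of $R$ in $C(X,\omega)$ is not reachable from $v$ avoiding $X$. Hence $f_\omega(X)=C(X,\omega)\subseteq B\setminus A$, i.e. $(A,B)$ points \emph{towards} $f_\omega$ — so this is the wrong construction for "dominates"; it shows instead $v\notin B$ for a separation pointing towards $f_\omega$, i.e. $v$ does not reverse-dominate $f_\omega$. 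To get the non-domination of $f_\omega$ correctly one runs the same construction with reachability reversed (vertices \emph{reaching} $v$), producing $(A,B)$ pointing away from $f_\omega$ with $v\notin A$.

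The main obstacle, then, is bookkeeping the two directions of reachability against the two directions of "points towards/away", and invoking the right form of Menger's theorem for digraphs to convert "no infinite $v$--$R$ fan" into "a finite set separating $v$ from a tail of $R$". Once those are pinned down, each of the four implications is a short argument, and the reverse-domination half follows by applying the domination half to the reverse of $D$. I would organise the write-up as: (1) the reduction to the non-reverse case; (2) domination of $\omega$ $\Rightarrow$ domination of $f_\omega$ (the pigeonhole-on-a-finite-separator argument above); (3) domination of $f_\omega$ $\Rightarrow$ domination of $\omega$ (Menger + the reachability-toward-$v$ separation); then (4) cite symmetry for the reverse statements.
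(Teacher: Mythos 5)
Your forward implication (if $v$ dominates $\omega$ then $v$ dominates $f_\omega$) is correct and is essentially the paper's own argument. The genuine gap is in the converse, which you never bring to a correct conclusion: your first, greedy-fan attempt is abandoned mid-sentence, and your contrapositive attempt assigns the separation the wrong way round. With $A=X\cup\{u:\text{there is a } v\text{--}u \text{ path internally avoiding } X\}$ it is \emph{not} true that there is no edge from $B\setminus A$ to $A\setminus B$ (an unreachable vertex may well send an edge into the reachable side); what your construction guarantees is that there is no edge from $A\setminus B$ to $B\setminus A$, i.e.\ that $(B,A)$, not $(A,B)$, is a separation in the paper's sense. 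Your proposed repair, ``run the same construction with reachability reversed (vertices reaching $v$)'', does not work either: then $v$ itself lies in the reachability closure, so $v\in A$ and nothing witnesses non-domination; moreover, that separation pointing away from $f_\omega$ would require every vertex of $C(X,\omega)$ to reach $v$ while avoiding $X$, which is simply not implied by the failure of a $v$--$R$ fan (that hypothesis controls paths \emph{out of} $v$; reaching $v$ is the reverse-domination situation). The correct repair keeps forward reachability but swaps the coordinates: with $S$ the finite set meeting every $v$--$R'$ path and $X\supseteq S$ finite, $v\notin X$, let $R_v$ be the set of vertices reachable from $v$ in $D-X$ and put $B:=X\cup R_v$, $A:=X\cup\bigl(V(D)\setminus B\bigr)$. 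Then $(A,B)$ is a separation of order $\le|X|$ (out-neighbours of $R_v$ lie in $R_v\cup X$), it points away from $f_\omega$ because $C(X,\omega)\cap R_v=\emptyset$ (a $v$--$C(X,\omega)$ path in $D-X$ could be prolonged inside $C(X,\omega)$ to a tail of $R'$, giving a $v$--$R'$ path avoiding $S$), and $v\in B\setminus A$, so $v\notin A$; this exhibits that $v$ does not dominate $f_\omega$ and closes the contrapositive.

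For comparison, the paper proves this direction directly rather than by contraposition: it extends a finite $v$--$R$ fan $F$ one path at a time, taking $X$ to be the vertices of $F$ (minus $v$) together with a suitable initial segment of $R$, ordering the strong components of $D-X$ by reachability, and observing that if $C(X,\omega)$ were not above the component $C$ containing $v$, then $\bigl(V(D)\setminus\bigcup\lfloor C\rfloor,\; X\cup\bigcup\lfloor C\rfloor\bigr)$ (with $\lfloor C\rfloor$ the up-closure of $C$) would be a finite order separation pointing away from $f_\omega$ with $v\notin A$, contradicting domination of $f_\omega$. Either route is fine once the bookkeeping is right; as written, though, your key step is incorrect.
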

\begin{proof}
We prove the statement in its `dominates' version; for the `reverse dominates' version consider the reverse of $D$. First, suppose that $v\in V(D)$  dominates $\omega$ and let $(A,B)$ be a finite order separation pointing away from $f_\omega$. Every ray $R$ that represents $\omega$ has a tail in $f_\omega(A\cap B)$; in particular in $D[A]$. As $D$ contains an infinite $v$--$R$ fan and the separator of $(A,B)$ is finite, it follows that $v$ is contained in $A$ as~well.

For the backward implication suppose that $v\in V(D)$ dominates~$f_\omega$. Given a ray $R$ that represents $\omega$, with $v\notin R$ say,  we need to find an infinite $v$--$R$ fan in~$D$. For this, we show that every finite $v$--$R$ fan $F$ in $D$ can be extended by one additional $v$--$R$ path; then an infinite such fan can be constructed recursively in countably many steps. Let $H$ be the union of the paths in $F$ and let $X$ consist of $V(H-v)$ together with the vertices of some finite initial segment of $R$ that contains all the vertices that $H$ meets on $R$. We may view the strong components of $D-X$ partially ordered by $C_1\le C_2$ if there is a path in $D-X$ from $C_1$ to $C_2$. Let $C$ be the strong component of $D-X$ that contains $v$ and let $\uc{C}$ be the set of all the strong components of $D-X$ that are $\ge C$. If $C(X,\omega)$ is contained in $\uc{C}$, then it is easy to find a $v$--$R$ path in $D$ that extends our fan $F$. We claim that this is always the case: Otherwise consider the finite order separation $(A,B)$ with $A:=V(D)\setminus \bigcup \uc{C}$ and $B:=X\cup \bigcup \uc{C}$. On the one hand, $(A,B)$ points away from~$f_\omega$. On the other hand, we have $v\notin A$ contracting that $v$ dominates~$f_\omega$.   
\end{proof}
 
\begin{lemma}\label{lemma: sequence of separations}
Let $D$ be any strongly connected digraph and let $f$ be any  vertex-direction of $D$. Then the following assertions are complementary:
\begin{enumerate}
    \item $f$ is (reverse) dominated;
    \item there is a strictly descending (ascending) sequence $((A_i,B_i))_{i\in\N}$ of finite order separations in $D$ with pairwise disjoint separators all pointing away from (towards) $f$.
\end{enumerate}
Moreover, a vertex-direction 
$f$ as in \emph{(ii)} is the unique vertex-direction in the closure of $U$ for any vertex set $U$ consisting of one vertex of $f(A_i\cap B_i)$ for every $i\in\N$. 
\end{lemma}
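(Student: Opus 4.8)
The plan is to prove the ``dominates'' version; the ``reverse dominates'' version then follows by passing to the reverse of $D$, since reversing swaps ``points towards'' with ``points away from'' and ``dominates'' with ``reverse dominates''. First I would show that (i) and (ii) cannot both hold. Suppose $v$ dominates $f$ and $((A_i,B_i))_{i\in\N}$ is a strictly descending sequence of finite order separations with pairwise disjoint separators all pointing away from $f$. Since $v$ dominates $f$, we have $v\in A_i$ for every $i$. As the sequence is strictly descending, $A_{i}\subsetneq A_{i+1}$ fails to give a contradiction directly; instead I would use that the separators $S_i:=A_i\cap B_i$ are pairwise disjoint and that $f(S_i)\subseteq A_i\setminus B_i$, so the sets $A_i\setminus B_i$ form a descending sequence whose ``difference layers'' $A_{i+1}\setminus A_i$ each contain the separator $S_i$. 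Then taking the infimum $(A_\infty,B_\infty):=(\bigcap_i A_i,\bigcup_i B_i)$, which by Lemma \ref{lemma: separations pointing to f and supremum}(ii) points away from $f$, its separator $A_\infty\cap B_\infty$ is disjoint from every $S_i$ and in fact $v\notin A_\infty$ because $v$ lies in only finitely many of the $A_i$ (each $v$--$R$ fan path through a separator $S_i$ uses up a vertex of the finite fan, but more cleanly: an infinite $v$--$R$ fan crosses each separator $S_i$, and since the $S_i$ are disjoint and each finite, only finitely many fan-paths are available to do so — so $v$ can be separated from $f$ by some $(A_i,B_i)$ with $v\notin A_i$, contradicting domination). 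I expect this direction to require a little care in organizing the fan-crossing argument.

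For the substantive direction, I would show $\neg$(i)$\Rightarrow$(ii): assume $f$ is undominated and construct the sequence recursively. Having built finite order separations $(A_0,B_0)>\dots>(A_n,B_n)$ pointing away from $f$ with pairwise disjoint separators, I put $X:=A_n\cap B_n$ (or the union of all separators so far, which is finite) and need a separation $(A_{n+1},B_{n+1})$ pointing away from $f$, strictly below $(A_n,B_n)$, whose separator avoids $X$. Here is where I use undomination: for each $v\in X$, since $v$ does not dominate $f$ there is a finite order separation $(C_v,D_v)$ pointing away from $f$ with $v\notin C_v$. Taking the infimum over $v\in X$ of these separations together with $(A_n,B_n)$, and using Lemma \ref{lemma: separations pointing to f and supremum}(ii) repeatedly, yields a finite order separation $(A',B')\le(A_n,B_n)$ pointing away from $f$ with $X\cap A'=\emptyset$, so in particular $X\cap(A'\cap B')=\emptyset$. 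If $(A',B')<(A_n,B_n)$ strictly we are done; if not, we may enlarge $B'$ slightly (e.g.\ move one vertex of $f(A_n\cap B_n)\subseteq A_n\setminus B_n$, which is nonempty and lies outside $A'$, into the $B$-side — using strong connectedness of $D$ to keep the pair a genuine separation) to make it strict while keeping it pointing away from $f$ and keeping the separator disjoint from $X$. This recursion produces the desired sequence.

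For the ``moreover'' part, let $U=\{u_i:i\in\N\}$ with $u_i\in f(A_i\cap B_i)$, where $((A_i,B_i))$ is as in (ii) (so $f$ is dominated, and the separators are disjoint and point away from $f$; note $f(A_i\cap B_i)\subseteq A_i\setminus B_i$). First, $f$ itself is in the closure of $U$: for any $Y\in\cX(D)$, since the separators $A_i\cap B_i$ are pairwise disjoint, all but finitely many are disjoint from $Y$, and for such an index $i$ one checks $f(Y)\supseteq f(Y\cup(A_i\cap B_i))$; because $f(A_i\cap B_i)\subseteq A_i\setminus B_i$ and these ``layers'' shrink, $f(Y)$ must contain $u_i$ for infinitely many $i$ — so $f(Y)\cap U\ne\emptyset$. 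For uniqueness: if $g$ is any vertex-direction in the closure of $U$ and $g\ne f$, pick $Z\in\cX(D)$ with $g(Z)\ne f(Z)$; enlarging $Z$ we may assume $Z\supseteq A_j\cap B_j$ for some large $j$. Since $(A_j,B_j)$ points away from $f$ but $g$ is in the closure of $U$ and $U\cap(B_j\setminus A_j)$ is finite (only $u_0,\dots,u_{j-1}$ can lie there, as $u_i\in A_i\setminus B_i\subseteq A_j\setminus B_j$ fails... ) — more precisely, for all $i\ge j$ we have $u_i\in A_i\setminus B_i\subseteq B_j$? I need to recheck the inclusion direction; the point is that cofinitely many $u_i$ lie on one fixed side determined by $f$, forcing $g$ to agree with $f$ on that separator and hence, by consistency of directions, everywhere.

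The main obstacle I anticipate is the construction step in $\neg$(i)$\Rightarrow$(ii): ensuring the new separation is \emph{strictly} below the previous one \emph{and} has separator disjoint from all previous (finitely many) separators \emph{and} still points away from $f$, all simultaneously. Lemma \ref{lemma: separations pointing to f and supremum}(ii) hands us ``points away from $f$'' for infima for free, and disjointness of separators follows by arranging $A'\cap X=\emptyset$; strictness is the delicate point and is where I rely on strong connectedness of $D$ to perform the one-vertex adjustment without destroying the separation property. The impossibility direction (i)$\wedge$(ii) and the ``moreover'' part are comparatively routine infinitary bookkeeping about disjoint finite separators and consistency of vertex-directions.
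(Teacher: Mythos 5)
Your construction direction follows the paper's approach (for each vertex $x$ of the current separator take a separation witnessing that $x$ does not dominate $f$, and pass to the infimum using Lemma~\ref{lemma: separations pointing to f and supremum}(ii)), but the other two parts have genuine gaps. The impossibility direction is not proved: you assert that a dominating vertex $v$ lies in only finitely many of the $A_i$ and justify this with an infinite $v$--$R$ fan crossing the separators, but there is no ray $R$ in this lemma --- domination of a vertex-direction is defined purely by the condition $v\in A$ for every finite order separation $(A,B)$ pointing away from $f$, and $f$ is not yet known to be of the form $f_\omega$ (that is exactly what the lemma is needed for in the proof of Theorem~\ref{thm: directions}). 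Your alternative route via the infimum $(\bigcap_i A_i,\bigcup_i B_i)$ is also not licensed as stated: Lemma~\ref{lemma: separations pointing to f and supremum} concerns infima of two (hence finitely many) separations, and in fact this infinite infimum has empty separator, so ``pointing away from $f$'' degenerates (here $f(\emptyset)=D$ because $D$ is strongly connected). The missing ingredient is strong connectedness, which you never use in this direction: in the paper one takes, for an arbitrary vertex $v$, a shortest path $P$ from $B_0\setminus A_0$ to $v$, say of length $j$; if $v\notin B_j\setminus A_j$ then $P$ would have to meet each of the $j+1$ pairwise disjoint separators $A_i\cap B_i$ with $i\le j$, which is impossible, so $(A_j,B_j)$ points away from $f$ with $v\notin A_j$ and $v$ does not dominate $f$.

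Two further points. In the construction you never produce the starting separation: one needs some finite order separation with non-empty separator pointing away from $f$, which the paper obtains by taking a non-empty finite $X$ and letting $A_0$ be $X$ together with the down-closure of $f(X)$ in the reachability order on the strong components of $D-X$. (Conversely, your worry about strictness is unnecessary: since the old separator is non-empty and disjoint from the new $A'$, the new separation cannot equal the old one, so the ``one-vertex adjustment'' --- whose soundness is unclear --- is not needed.) Finally, the ``moreover'' part is left open at its crucial step, as you acknowledge: to conclude $u_j\in f(Y)$ you need $f(X_j)=f(Y\cup X_j)$, which requires $Y$ to avoid the strong component $f(X_j)$, not merely the separators; this again comes from the same shortest-path/strong-connectedness argument, which yields $Y\subseteq B_j\setminus A_j$ for $j$ large, while uniqueness then follows because $f(X_i)$ is the unique strong component of $D-X_i$ meeting $U$ infinitely and every vertex-direction in the closure of $U$ meets $U$ in an infinite set. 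As written, these steps are missing rather than merely compressed.
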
 

\begin{proof} We prove the case where $f$ is dominated and that the sequence in (ii) is descending; the proof of the case where $f$ is reverse dominated and the sequence in (ii) is ascending can then be obtained by considering the reverse of $D$. To begin, we will show that not both assertions can hold at the same time. Suppose that $((A_i,B_i))_{i\in\N}$ is as in (ii). We show that for every $v\in V(D)$ there is a separation $(A,B)$ of $D$ pointing away from $f$ with $v\in B\setminus A$. We claim that $(A,B):=(A_j,B_j)$ can be taken for $j\in\N$ large enough, a fact that we verify as follows: 

As $D$ is strongly connected there is path from $B_0 \setminus A_0$   to $v$. Let $j$ be the length of a shortest path $P$ from $B_0 \setminus A_0$ to $v$. Then $v$ is contained in $B_j\setminus A_j$, because otherwise $P$ would contain $j+1$ vertices---one from each of the separators $B_i \cap A_i$ with $i\le j$---contradicting that $P$ has length $\le j$.

Next, we assume that $f$ is not dominated  and  construct a sequence $((A_i,B_i))_{i\in\N}$ as in (ii). Let $(A_0,B_0)$ be any finite order separation with non-empty separator pointing away from $f$. To see that such a separation exist consider any non-empty finite vertex set $X\subseteq V(D)$. We may view the strong components of $D-X$ partially ordered by $C_1\le C_2$ if there is a path in $D-X$ from $C_1$ to $C_2$. Let $\dc{f(X)}$ be the down-closure of all the strong components $\le f(X)$. Then we can take $A_0:= \dc{f(X)}\cup X$ and $B_0:= V(D)\setminus \dc{f(X)}$.

Now, assume that $(A_n,B_n)$ has already been defined. Since no vertex  dominates $f$ we find for every $ x \in A_n \cap B_n$ a separation $(A_x,B_x)$ pointing away from $f$ such that $x\in B_x\setminus A_x$. Letting $(A_{n+1},B_{n+1})$ be the infimum of all the $(A_x,B_x)$ and $(A_n,B_n)$ completes the construction. Indeed, $(A_{n+1},B_{n+1})$ points away from $f$ by Lemma~\ref{lemma: separations pointing to f and supremum} and its separator is disjoint from all the previous ones as $A_{n+1} \cap B_{n+1} \subseteq A_n \setminus B_n$.

For the `moreover' part let us write $X_i:= A_i\cap B_i$ for every $i\in\N$. We first show that $f$ is a vertex-direction in the closure of $U$. Given $X\in \cX(D)$ we need to show that $f(X)$ meets~$U$. With a distance argument as above one finds $j$ such that all the vertices of $X$ are contained in $B_j\setminus A_j$. Then $f(X_j)$ is included in $f(X)$ because $f(X_j)=f(X\cup X_j)$. In particular $f(X)$ contains the vertex from $U$ that was picked from~$f(X_j)$.

Finally, we prove that $f=f'$ for every vertex-direction $f'$ that is in the closure of $U$. Given $f'$ it suffices to show that $f(X_i)=f'(X_i)$ for every $i\in \N$: then $f(X)=f'(X)$ for every $X\in \cX(D)$ since we have $$f(X_j)=f(X\cup X_j)\subseteq f(X)\;\textup{ and }\;f'(X_j)=f'(X\cup X_j)\subseteq  f'(X)$$ for $j$ large enough. We verify 
that $f(X_i)=f'(X_i)$ for every $i\in \N$ as follows: First note that the sequence $(f(X_i))_{i\in\N}$ is descending, because $$f(X_{i+1})= f(X_i\cup X_{i+1})\subseteq f(X_i).$$ Hence $f(X_i)$ contains all but finitely many vertices from $U$ for every $i\in \N$. In particular $f(X_i)$ is the only strong component of $D-X_i$ that contains infinitely many vertices from $U$. As a consequence we have $f(X_i)=f'(X_i)$ for every~$i\in~\N$.
\end{proof}

\begin{proof}[Proof of Theorem~\ref{thm: directions}]
It is straightforward to show that the map $\omega\mapsto f_\omega$ with domain $\Omega(D)$ and codomain the set of vertex-directions of $D$ is injective; we prove that it is onto. So given a vertex-direction $f$ of $D$ we need to find an end $\omega \in \Omega(D) $ such that $f_\omega = f$. 
Let $S_1^*$ be the set of all the vertices that dominate $f$ and $S_2^*$ the set of all the vertices that reverse dominate $f$. We split the proof into three cases:

First, assume that both $S_1^*\cap f(X)$ and $S_2^*\cap f(X)$ are non-empty for every $X\in \cX(D)$. Then $f$ is a vertex-direction in the closure of $\cU$ for $\cU:= \{S_1^*,S_2^* \} $. By Lemma \ref{lemma: connection closure; direction; necklace} we find an end $\omega$ in the closure of $\cU$ and we claim that $f_\omega = f$. Indeed, given $X\in \cX(D)$ we need to show that $C(X,\omega)=f(X)$. We may view the strong components of $D - X$ partially ordered by $C_1 \leq C_2$ if there is a path in $D-X$ from $C_1$ to $C_2$. By the order-extension-principle we choose a linear extension of $\leq$. Let $$(A_1,B_1):=(\bigcup \cA_1\cup X,X\cup \bigcup \cB_1),$$ where $\cA_1$ consists of all the strong components of $D-X$ strictly smaller than $f(X)$ and $\cB_1$ of all the others. Then $(A_1,B_1)$ points towards $f$. Since $S_2^*$ consists of the vertices reverse dominating $f$ we have $S_2^*\subseteq  B_1$. Since $\omega$ is in the closure of $\cU$ we have $C(X,\omega)\in \cB_1$.
Similarly, let $$(A_2,B_2):=(\bigcup \cA_2\cup X,X\cup \bigcup \cB _2),$$ where $\cA_2$ consists of all the strong components of $D-X$ smaller or equal to $f(X)$ and $\cB_2$ of all the others. Analogously to the argumentation for $C(X,\omega)\in \cB_1$, one finds out that $C(X,\omega)\in \cA_2$; together $C(X,\omega) \in \cB_1 \cap \cA_2$. Now, $f(X)=C(X,\omega)$ follows from the fact that $f(X)$ is the only element in the intersection $\cB_1\cap\cA_2$.

Second, suppose that $S_1^*\cap f(X)$ is empty for some $X\in \cX(D)$. If even $S_1^*$ is empty and $D$ strongly connected, then Lemma~\ref{lemma: sequence of separations}
and Lemma~\ref{lemma: connection closure; direction; necklace} do the rest: with $U$ as in the `moreover' part of Lemma~\ref{lemma: sequence of separations}, we have that $f$ is the unique vertex-direction in the closure of $U$ and Lemma~\ref{lemma: connection closure; direction; necklace} yields an end $\omega$ in the closure of $U$; by uniqueness $f_\omega=f$. 

In the following we will argue that we may assume $S_1^*$ to be empty and $D$ to be strongly connected. Fix $X'\in\cX(D)$ with $S_1^*\cap f(X^\prime)=\emptyset$. Let $D'=f(X')$ and let $f'$ be the vertex-direction of $D'$ induced by $f$, i.e., $f'$ sends a finite vertex set $X\subseteq V(D')$ to $f(X\cup X')$. Then the set of all the vertices that dominate $f'$ is empty: If $(A,B)$ is a finite order separation of $D$ that points away from $f$, then $(A\cap V(D'), B\cap V(D'))$ is a finite order separation of $D^\prime$ that points away from $f'$. As a consequence, any vertex from $D'$ that dominates $f'$ also dominates $f$, which means there is none.

Now, consider the end $\omega'$ of $D'$ with $f_{\omega'}=f'$ and the unique end $\omega$ of $D$  that contains $\omega'$ as a subset (of rays). We claim $f_{\omega}=f$, a fact that we verify as follows.
First observe that for $X\in \cX(D)$ with $X'\subseteq X$ we have 
$$f_\omega(X) 
=f_{\omega'}(X\cap V(D'))
=f'(X\cap V(D'))
= f(X).$$
Now let $X$ be an arbitrary finite vertex set of $D$. Since $f$ and $f_\omega$ are vertex-directions we have that $f(X\cup X')\subseteq f(X)$ and $f_\omega(X\cup X')\subseteq f_\omega (X)$. Furthermore, by our observation we have $f(X\cup X')= f_\omega(X\cup X')$. Hence also $f(X)=f_\omega (X)$ using that both $f(X)$ and $f_\omega(X)$ are strong components of $D-X$. 

Finally, the proof of the last case, that $S_2^*\cap f(X)$ is empty for some $X\in\cX(D)$, is analogue to the proof of the second case.
\end{proof}

\section{Limit edges and edge-directions}\label{section: limit edges}
\noindent In this section, we investigate limit edges of digraphs. Recall that, for two distinct ends $\omega, \eta\in \Omega(D)$, we call the pair $(\omega,\eta)$ a \emph{limit edge} from $\omega$ to $\eta$, if $D$ has an edge from $C(X,\omega)$ to $C(X,\eta)$ for every finite vertex set $X\subseteq V(D)$ that separates $\omega$ and $\eta$. 
For a vertex $v\in V(D)$ and an end $\omega\in \Omega(D)$ we call the pair $(v,\omega)$ a \emph{limit edge} \emph{from} $v$ \emph{to} $\omega$ if $D$ has an edge from $v$ to $C(X,\omega)$ for every finite vertex set $X\subseteq V(D)$ with $v \not\in  C(X, \omega)$. Similarly, we call the pair $(\omega,v)$ a \emph{limit edge} \emph{from} $\omega$ \emph{to} $v$ if $D$ has an edge from $C(X,\omega)$ to $v$ for every finite vertex set $X\subseteq V(D)$ with $v \not\in C(X, \omega)$. We write $\Lambda(D)$ for the set of limit edges of $D$. 
As we do for `ordinary' edges of a digraph, we will suppress the brackets and the comma in our notation of limit edges. For example we write $\omega\eta$ instead of $(\omega,\eta)$ for a limit edge between ends $\omega$ and $\eta$.

We begin this section with two propositions (Proposition~\ref{proposition: char limit edge type I} and Proposition~\ref{proposition: char limit edge type II}) saying that limit edges are witnessed by subdigraphs that are essentially the digraphs in Figure~\ref{fig:ladder} or Figure~\ref{fig: dominated ray}. Subsequently we prove Theorem~\ref{thm: edge-direction}.

Let $D$ be any digraph and let $\omega\in \Omega(D)$. With a slight abuse of notation, we say that a necklace $N\subseteq D$ \emph{represents} an end $\omega$ of $D$ if one (equivalently every) ray in $N$ represents~$\omega$. Note that for every end $\omega$ there is a necklace that represents $\omega$. Indeed,  apply the necklace lemma to any ray that represents $\omega$. 
\begin{proposition}\label{proposition: char limit edge type I}
For a digraph $D$ and two distinct ends $\omega$ and $\eta$ of $D$ the following assertions are equivalent:
\begin{enumerate}
    \item $D$ has a limit edge from $\omega$ to $\eta$;
    \item there are necklaces $N_\omega\subseteq D$ and $N_\eta\subseteq D$  that represent $\omega$ and $\eta$ respectively such that every bead of $N_\omega$ sends an edge to a bead of $N_\eta$. 
\end{enumerate}
Moreover, the necklaces may be chosen disjoint from each other and such that the $n$th bead of $N_\omega$ sends an edge to the $n$th bead of $N_\eta$.
\end{proposition}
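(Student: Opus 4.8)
The plan is to prove the two implications separately, with the bulk of the work in (i)$\to$(ii), and then extract the ``moreover'' part from the construction.

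For the easy direction (ii)$\to$(i): suppose we have necklaces $N_\omega, N_\eta$ as described. Any ray $R_\omega \subseteq N_\omega$ represents $\omega$ and any ray $R_\eta \subseteq N_\eta$ represents $\eta$. Let $X \in \cX(D)$ separate $\omega$ and $\eta$. Since $N_\omega$ is attached to $V(R_\omega)$ (it contains $R_\omega$) and $\omega$ lives in $C(X,\omega)$, only finitely many beads of $N_\omega$ can lie outside $C(X,\omega)$; in fact infinitely many beads of $N_\omega$ lie entirely inside $C(X,\omega)$, and similarly infinitely many beads of $N_\eta$ lie inside $C(X,\eta)$. Pick a bead $B$ of $N_\omega$ inside $C(X,\omega)$ whose index is so large that the bead of $N_\eta$ it sends an edge to also lies inside $C(X,\eta)$ --- this is possible because the beads of $N_\eta$ outside $C(X,\eta)$ form a finite initial segment, and the bead indices are monotone along the necklace. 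That edge witnesses $E(C(X,\omega), C(X,\eta)) \neq \emptyset$. Hence $D$ has a limit edge from $\omega$ to $\eta$.

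For the hard direction (i)$\to$(ii): I would build the two necklaces simultaneously by a recursive construction that interleaves two necklace-lemma-style arguments (as in the proof of Lemma~\ref{lemma: necklace lemma}) with a back-and-forth step to produce the edges between corresponding beads. Start with necklaces $N_\omega^0, N_\eta^0$ representing $\omega$, $\eta$ (they exist by the remark preceding the proposition); WLOG make them disjoint by deleting a finite initial segment from one of them --- since $\omega \neq \eta$, some finite $X$ separates them, and beyond that separator the tails lie in disjoint strong components, so only a finite overlap can occur. At stage $n$, having built initial segments up to the $n$th beads of both necklaces, consider a finite set $X_n$ that separates $\omega$ and $\eta$ and contains everything built so far; by hypothesis (i) there is an edge $e_n = u_nv_n$ from $C(X_n,\omega)$ to $C(X_n,\eta)$. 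Now one must incorporate $u_n$ into a bead of $N_\omega$ and $v_n$ into a bead of $N_\eta$: since $u_n \in C(X_n,\omega)$ and this strong component contains a tail of the ray representing $\omega$, run the necklace-lemma recursion inside $C(X_n,\omega)$ starting from (the last bead built so far unioned with) $u_n$ to get the next bead of $N_\omega$ as a finite strongly connected set containing $u_n$ and properly linked (by forward and backward paths) to the previous bead; do the analogous thing in $C(X_n,\eta)$ for $v_n$. The edge $e_n$ then goes from the $n$th bead of $N_\omega$ to the $n$th bead of $N_\eta$. Taking unions over all $n$ gives necklaces attached to (tails of) the original rays --- hence still representing $\omega$ and $\eta$ --- that are disjoint and realize the bead-to-bead edge condition, which is exactly (ii) together with the ``moreover'' clause.

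The main obstacle is the bookkeeping in the interleaved construction: one must ensure at each stage that (a) the new bead of $N_\omega$ really is strongly connected and really does lie in a single strong component of $D - X_n$ (so that the necklace structure --- an inflated symmetric ray with finite branch sets --- is preserved), (b) the forward and backward connecting paths between consecutive beads stay inside the appropriate strong component and are internally disjoint from everything built so far and from the other necklace, and (c) the indices stay synchronized so that it is genuinely the $n$th bead of $N_\omega$ that sends an edge to the $n$th bead of $N_\eta$. The key facts that make (a)--(c) go through are: a strong component of $D-X$ containing a tail of a solid ray representing $\omega$ again contains such a tail after deleting finitely many further vertices (so the recursion never gets stuck), and the disjointness of $C(X_n,\omega)$ and $C(X_n,\eta)$ keeps the two necklace-building processes from interfering. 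Everything else is routine in the style of the proof of Lemma~\ref{lemma: necklace lemma}.
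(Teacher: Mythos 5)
Your overall plan for (i)$\to$(ii) — a simultaneous recursion that at each stage uses the limit edge to supply an edge between the $n$th beads — is the same general idea as the paper's, but the step that carries all the weight is not justified, and this is exactly the point where the paper's proof does something you don't. You delete ``everything built so far'' (including the last beads) to form $X_n$, pick the witnessing edge $u_nv_n$ from $C(X_n,\omega)$ to $C(X_n,\eta)$, and then assert that the new bead containing $u_n$ can be ``properly linked (by forward and backward paths) to the previous bead'' avoiding all built material. Nothing guarantees such paths exist at later stages: for them to exist you need the previous bead to lie in the same strong component of $D$ minus (all built material except that bead) as $C(X_n,\omega)$, i.e.\ an invariant saying the last bead has not been cut off from the end by the structure you built earlier (including its own connecting paths). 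In the necklace lemma this non-stuckness is secured by the rank invariant (``no $\cU$-rank'' persists under finite deletion); your substitute fact — that a solid ray keeps a tail after deleting finitely many more vertices — says nothing about the bead you actually built, which is forced to contain the prescribed vertex $u_n$ and may well end up separated from $\omega$ once its connecting paths are added to the forbidden set. The paper avoids this problem entirely by a different device: it fixes auxiliary necklaces $N'_\omega,N'_\eta$ in advance and only \emph{fattens their beads}, taking as the new $(n+1)$th bead a finite strongly connected set inside $C(X,\omega)$ that contains the segment $N'_\omega[n{+}1,m]$ together with the endvertex of the witnessing edge, and reusing the auxiliary necklace's subdividing paths as connecting paths; the invariant $N'_\alpha[n]\subseteq N^n_\alpha[n]$ keeps the recursion alive because the untouched tail of $N'_\alpha$ always survives in the relevant strong component. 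Your construction could be repaired by incorporating exactly this mechanism, but as written the recursive step may fail.

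Two further points. First, the same omission undermines your final claim that the constructed necklaces ``are attached to (tails of) the original rays --- hence still representing $\omega$ and $\eta$'': your new beads are only required to contain $u_n$ and to link to the previous bead, so nothing forces them to meet the original rays or auxiliary necklaces, and representation of $\omega$ and $\eta$ is not established; in the paper this is automatic because the constructed necklace contains $N'_\alpha$. Second, in (ii)$\to$(i) your justification appeals to ``bead indices are monotone along the necklace'', which is not part of statement (ii): knowing only that every bead of $N_\omega$ sends an edge to \emph{some} bead of $N_\eta$, you cannot force the target bead to lie in $C(X,\eta)$ by choosing the source bead far out (all edges could go to one fixed early bead of $N_\eta$). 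What the argument really uses is that all but finitely many beads of each necklace lie in $C(X,\omega)$ and $C(X,\eta)$ respectively, combined with the bead-to-bead synchronisation from the `moreover' clause, which the forward construction provides.
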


\begin{proof}
We begin with the forward implication (i)$\to$(ii). By possibly deleting a finite vertex set of $D$ that separate $\omega$ and $\eta$, we may assume that $\omega$ and $\eta$ live in distinct strong components of $D$. Given a necklace $N$ let us write $N[n,m]$ for the inflated symmetric path from the $n$th bead to the $m$th bead of $N$ and $N[n]$ for the inflated symmetric path from the first bead to the $n$th bead of $N$.  First, let us  fix auxiliary necklaces $N_\omega^\prime\subseteq D$  and $N_\eta^\prime\subseteq D$ that represent $\omega$ and $\eta$, respectively.

We inductively construct sequences $(N_\alpha^n)_{n \in \N}$ of necklaces, for $\alpha \in \{ \omega , \eta \}$, so that $N_\alpha^{n}[n-1] = N_\alpha^{n-1}[n-1]$  and the $n$th bead of $N_\omega^n$ sends an edge to the $n$th bead of $N_\eta^n$. Furthermore, we will make sure that $N^\prime_\alpha[n]\subseteq N_\alpha^n[n]$.

Then the unions $\bigcup \{\,N_\alpha^{n}[n]\mid n\in \N\,\}$ define  necklaces $N_\alpha$, for $\alpha \in \{ \omega ,\eta \}$,  as desired. Indeed, as $N_\alpha$ includes $N^\prime_\alpha$ it  also represents $\alpha$. Note, that our construction yields the `moreover' part.
Let $n\in \N$ and suppose that $N_\omega^n$ and $N_\eta^n$ have already been constructed. Let $X$ be the union of $N^n_\omega[n]$, $N_\eta^n[n]$ and the two paths between the $n$th bead and the $(n+1)$th bead of $N^n_\omega$ and $N^n_\eta$, respectively. So $X$ might be empty for $n=0$. Note that by our assumption $\omega$ and $\eta$ live in distinct strong components of $D$, so in particular they also live in distinct strong components of $D-X$. As $D$ has a limit edge from $\omega$ to $\eta$ we find an edge $e$ from $C(X,\omega)$ to $C(X,\eta)$. Fix a finite strongly connected vertex set $Y_\alpha\subseteq C(X,\alpha)$ that includes $N^n_\alpha[n+1,m]$ for a suitable $m\ge n+1$ and the endvertex of $e$ in $C(X,\alpha)$ but that avoids the rest of $N^n_\alpha$ for $\alpha \in\{\omega,\eta\}$. Replacing the inflated symmetric subpath $N^n_\alpha[n+1,m]$ by $Y_\alpha$ and declaring $Y_\alpha$ as the $(n+1)$th bead of $N_{\alpha}^{n+1}$ for $\alpha\in\{\omega, \eta\}$ yields necklaces $N_\omega^{n+1}$ and $N_\eta^{n+1}$ that are as desired.

Now, let us prove the backward implication (ii)$\to$(i). As every finite vertex set $X$ meets only finitely many beads of $N_\omega$ and $N_\eta$ there are beads of $N_\omega$ and $N_\eta$ that are included in $C(X,\omega)$ and $C(X,\eta)$, respectively. Hence, if $X$ separates $\omega$ and $\eta$, there is an edge from $C(X,\omega)$ to $C(X,\eta)$.
\end{proof}
\noindent There is a natural partial order on the set of ends, where $\omega \leq \eta$ if for every two rays $R_\omega$ and $R_\eta$ that represent $\omega$ and $\eta$, respectively, there are infinitely many pairwise disjoint paths from $R_\omega$ to $R_\eta$. By Proposition~\ref{proposition: char limit edge type I} we have that $\omega\le \eta$, whenever $\omega\eta$ is a limit edge for ends $\omega$ and $\eta$. The converse of this is in general false, for example in the digraph that is obtained from the digraph in Figure~\ref{fig:ladder} by subdividing every vertical edge once. 

\begin{proposition}\label{proposition: char limit edge type II}
For a digraph $D$, a vertex $v$ and an end $\omega$ of $D$ the following assertions are equivalent:
\begin{enumerate}
    \item{$D$ has a limit edge from $v$ to $\omega$ (from $\omega$ to $v$);}
    \item{there is a necklace $N\subseteq D$ that represents $\omega$ such that $v$ sends (receives) an edge to (from) every bead of $N$.}
\end{enumerate}
\end{proposition}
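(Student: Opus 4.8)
The plan is to prove the `$v$ to $\omega$' version of the equivalence; the `$\omega$ to $v$' version is then obtained by reversing the direction of all edges throughout the argument (equivalently, by applying the `$v$ to $\omega$' version to the reverse of $D$, using that reversing edges changes neither the strong components of any $D-X$ nor, hence, the ends, while it turns necklaces into necklaces and a limit edge from $\omega$ to $v$ into one from $v$ to $\omega$).

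For the implication (ii)$\to$(i), suppose $N\subseteq D$ is a necklace representing $\omega$ such that $v$ sends an edge to every bead of $N$, and let $X\in\cX(D)$ be any finite vertex set with $v\notin C(X,\omega)$. Since $X$ is finite it meets only finitely many beads of $N$, and since $N$ represents $\omega$ some ray $R\subseteq N$ is solid and has a tail in $C(X,\omega)$; this tail meets all but finitely many beads of $N$. As each bead is strongly connected, a bead that avoids $X$ and meets $C(X,\omega)$ must be contained in $C(X,\omega)$; hence some bead $B$ of $N$ satisfies $B\subseteq C(X,\omega)$. The edge from $v$ into $B$ then witnesses an edge from $v$ into $C(X,\omega)$. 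As $X$ was arbitrary, $D$ has a limit edge from $v$ to $\omega$.

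For the implication (i)$\to$(ii), I would run the recursive construction from the proof of Proposition~\ref{proposition: char limit edge type I} with the single vertex $v$ playing the role of the necklace $N_\omega$ there. Fix an auxiliary necklace $N'$ representing $\omega$ (obtained by applying the necklace lemma to a ray that represents $\omega$) and build an increasing chain of necklaces $N^0\subseteq N^1\subseteq\cdots$, each containing a suitable initial stretch of $N'$, each representing $\omega$, and with the $n$th bead of $N^n$ receiving an edge from $v$; the union of these initial stretches is the desired necklace. At the recursion step, having built $N^n$, let $X$ be the union of its first $n$ beads with the two connecting paths to its $(n+1)$st bead, as in Proposition~\ref{proposition: char limit edge type I}. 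The only new point is that the limit-edge hypothesis yields an edge out of $v$ into the relevant strong component only after $v$ has been removed from it, so one works with $X':=X\cup\{v\}$ (still finite, and with $C(X',\omega)\subseteq C(X,\omega)$), obtains an edge $e$ from $v$ into $C(X',\omega)$, and absorbs the endvertex of $e$ together with a finite stretch of the remaining beads of $N^n$ beyond bead $n+1$ into a new strongly connected $(n+1)$st bead chosen inside $C(X',\omega)$. Being contained in $C(X',\omega)$, the new bead automatically avoids $v$ and the already-fixed part of $N^n$.

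The remaining verifications are exactly those in the proof of Proposition~\ref{proposition: char limit edge type I}: that the initial stretches form an increasing chain whose union is a genuine necklace, that it contains $N'$ and therefore represents $\omega$, and that every bead receives an edge from $v$. The one point requiring a little care, again as in Proposition~\ref{proposition: char limit edge type I}, is to maintain along the recursion that $N^n$ still represents $\omega$, so that a finite stretch of its beads beyond bead $n+1$ really does lie in the current $C(X',\omega)$; this is the main (and minor) obstacle, everything else being bookkeeping.
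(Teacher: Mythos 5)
Your proposal is correct and follows essentially the same route as the paper: the backward implication via the observation that only finitely many beads meet $X$ so some bead lies in $C(X,\omega)$, the forward implication by rerunning the recursive construction of Proposition~\ref{proposition: char limit edge type I} with $v$ in place of one necklace, and the reversed case by passing to the reverse of $D$. Your explicit use of $X'=X\cup\{v\}$ to legitimise the appeal to the limit-edge definition is exactly the detail the paper leaves implicit in its phrase ``a similar recursive construction''.
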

\begin{proof} 
We consider the case that $v$ sends an edge to every bead of $N$; for the other case consider the reverse of $D$.

For the forward implication (i)$\rightarrow$(ii) a similar recursive construction as in the proof of Proposition~\ref{proposition: char limit edge type I} yields a necklace $N$ as desired.

Now, let us prove the backward implication (ii)$\to$(i). As every finite vertex set $X$ hits only finitely many beads of $N$, there is one bead that is contained in $C(X, \omega)$. Therefore  there is an edge from $v$ to $C(X, \omega)$  whenever $v\notin C(X,\omega)$. 
\end{proof}
\noindent As a consequence of this proposition, every vertex $v\in V(D)$ for which $D$ has a limit edge from $v$ to an end $\omega\in \Omega(D)$ dominates $\omega$. The converse of this is in general false, for example in the digraph that is obtained from the digraph in Figure~\ref{fig: dominated ray}  by subdividing every edge once. Similarly, if $\omega v$ is a limit edge between an end $\omega$ and a vertex $v$, then $v$ reverse dominates $\omega$; the converse is again false in general.

Now, let us turn to our second type of directions. We call a direction $f$ of $D$ an \emph{edge-direction}, if there is some $X \in \cX(D)$ such that $f(X)$ is a bundle of $D-X$, i.e., if $f$ is not a vertex-direction. 
Recall that every end defines a vertex-direction. Similarly, every limit edge $\lambda$ defines an edge-direction as follows. 

We say that a limit edge $\lambda=\omega\eta$ \emph{lives} in the bundle defined by $E(X, \lambda)$ if $X \in \cX(D)$ separates $\omega$ and $\eta$.  If $X \in \cX(D)$ does not separate $\omega$ and $\eta$, we say that $\lambda=\omega\eta$ \emph{lives} in the strong component $C(X, \omega)= C(X, \eta)$ of $D-X$.
We use similar notations for limit edges of the form $\lambda= v \omega$ or $\lambda=  \omega v$ with $v\in V(D)$ and $\omega\in \Omega(D)$: We say that a limit edge $\lambda$  \emph{lives in} the bundle $E(X,\lambda )$ if $v \not\in C(X,\omega)$ and we say that $\lambda$  \emph{lives in} the strong component $C(X, \omega)$ of $D-X$, if $v \in C(X, \omega)$.

The edge-direction $f_\lambda$ defined by $\lambda$ is the edge-direction that sends every finite vertex set $X\subseteq V(D)$ to the bundle or strong component of $D-X$ in which $\lambda$ lives. Our next theorem states that there is a one-to-one correspondence between the edge-directions of a digraph and its limit edges:

\begin{mainresult}\label{thm: edge-direction}
Let $D$ be any infinite digraph. The map $\lambda \mapsto f_{\lambda}$ with domain $\Lambda(D)$ is a bijection between the  limit edges and the  edge-directions of $D$.
\end{mainresult}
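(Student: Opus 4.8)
The plan is to mimic the structure of the proof of Theorem~\ref{thm: directions}: first show that $\lambda\mapsto f_\lambda$ is well-defined and injective, which should be routine from the definitions of ``lives in'' together with Proposition~\ref{proposition: char limit edge type I} and Proposition~\ref{proposition: char limit edge type II} (two distinct limit edges are separated by some finite $X$ on which their $f$-values differ, because the witnessing necklaces can be pushed into distinct bundles). The substance of the theorem is surjectivity: given an edge-direction $f$ of $D$, we must produce a limit edge $\lambda$ with $f_\lambda=f$. So fix $X_0\in\cX(D)$ with $f(X_0)$ a bundle of $D-X_0$, say $f(X_0)=E(P,Q)$ where $P,Q$ are strong components of $D-X_0$ (or one of them is a vertex of $X_0$ — I will treat the ``between two strong components'' case as the main case and indicate the vertex case as a minor variant, passing to the reverse of $D$ where needed to cover the symmetric subcase).

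The key idea is to extract two ``coordinate'' vertex-directions from $f$ and an infinite family of edges linking them. Concretely, for each $X\in\cX(D)$ with $X\supseteq X_0$, consistency forces $f(X)$ to be a bundle of $D-X$, and that bundle is $E(C,C')$ for uniquely determined strong components $C=:g(X)$ and $C'=:h(X)$ of $D-X$ (with $C\subseteq P$, $C'\subseteq Q$); moreover $f(X)\supseteq f(Y)$ for $X\subseteq Y$ forces $g(X)\supseteq g(Y)$ and $h(X)\supseteq h(Y)$. One checks that $g$ and $h$ extend (by setting $g(X):=g(X\cup X_0)$, etc.) to genuine vertex-directions of $D$: the only point needing care is that $g(X\cup X_0)$ really is a strong component of $D-X$ and not a proper sub-object, which follows because enlarging the deleted set only subdivides strong components, so $g$ is the nested intersection demanded by the definition of a vertex-direction. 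By Theorem~\ref{thm: directions} there are ends $\omega,\eta$ of $D$ with $f_\omega=g$ and $f_\eta=h$, i.e. $g(X)=C(X,\omega)$ and $h(X)=C(X,\eta)$ for all $X\supseteq X_0$; since $f(X_0)=E(C(X_0,\omega),C(X_0,\eta))$ is non-empty, $X_0$ separates $\omega$ and $\eta$, so they are distinct ends. Now for every finite $X$ separating $\omega$ and $\eta$ we have (after replacing $X$ by $X\cup X_0$, which only shrinks the strong components and hence suffices by monotonicity) that $f(X\cup X_0)=E(C(X\cup X_0,\omega),C(X\cup X_0,\eta))$ is a non-empty bundle, so there is an edge from $C(X\cup X_0,\omega)\subseteq C(X,\omega)$ to $C(X\cup X_0,\eta)\subseteq C(X,\eta)$; that is exactly the condition for $\lambda:=\omega\eta$ to be a limit edge. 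Finally one verifies $f_\lambda=f$: the two maps agree on all $X\supseteq X_0$ by construction, and they agree on arbitrary $X$ because $f(X)\supseteq f(X\cup X_0)=f_\lambda(X\cup X_0)$ and $f_\lambda(X)\supseteq f_\lambda(X\cup X_0)$, and a bundle (resp. strong component) of $D-X$ containing a given bundle of $D-(X\cup X_0)$ is uniquely determined.

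The case where $f(X_0)$ is a bundle between a vertex and a strong component is handled the same way but is easier: there $f(X)$ must forever involve a fixed vertex $v$ (the vertex cannot disappear into infinity along a consistent chain of finite separators, since $v$ is not deleted unless we put it into $X$, and if $v\in X$ the bundle is $E(v,C(X,\omega))$ or $E(C(X,\omega),v)$, while if $v\notin X$ the relevant strong component is $C(X,\omega)$ with $v$ in another component), so only one vertex-direction $g=h$ needs to be extracted, yielding an end $\omega$ with $f_\omega=g$, and $\lambda:=v\omega$ or $\lambda:=\omega v$ does the job by Proposition~\ref{proposition: char limit edge type II}'s characterisation or directly from the limit-edge definition.

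I expect the main obstacle to be the bookkeeping in the surjectivity argument that $g$ and $h$ are \emph{honest} vertex-directions — in particular pinning down that the object $f(X)$ attaches to on each side is a full strong component of $D-X$ (so that the nestedness $g(X)\supseteq g(Y)$ has the right meaning and Theorem~\ref{thm: directions} applies) — and the careful handling of finite vertex sets $X$ not containing $X_0$, where $f(X)$ could a priori be a strong component even though $f(X_0)$ is a bundle; the resolution is that $f(X)\supseteq f(X_0)$ cannot hold with $f(X)$ a strong component unless that component contains the whole non-empty edge set $f(X_0)$ together with both its endpoints, whereas enlarging to $X\cup X_0$ then splits them apart, and one must argue this does not contradict consistency but rather simply means $\omega$ and $\eta$ are not yet separated by $X$ — exactly the ``lives in a strong component'' clause in the definition of $f_\lambda$, so it matches. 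Once these compatibility points are nailed down, everything else reduces to the uniqueness statements already packaged in Theorems~\ref{thm: directions} and the two propositions.
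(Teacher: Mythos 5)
Your overall plan (split $f$ into one or two vertex-directions, apply Theorem~\ref{thm: directions}, then check the resulting pair is a limit edge inducing $f$) is the right one, and your monotonicity argument via $X\cup X_0$ for the final verification is fine. The gap is in your main case: the claim that once $f(X_0)=E(P,Q)$ is a bundle between two strong components, ``consistency forces $f(X)$ to be a bundle $E(C,C')$ with $C\subseteq P$, $C'\subseteq Q$'' for every $X\supseteq X_0$, is false, and your two-coordinate extraction breaks exactly there. Concretely, take a necklace $N$ representing an end $\omega$ and a vertex $v\notin V(N)$ with no incoming edges that sends an edge to every bead of $N$ (a one-way variant of Figure~\ref{fig: dominated ray}). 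The edge-direction $f_{v\omega}$ sends $X_0=\emptyset$ to $E(\{v\},V(N))$, a bundle between two strong components, but sends $\{v\}\supseteq X_0$ to the vertex bundle $E(v,C(\{v\},\omega))$. On the tail side there is no strong component of $D-X$ once $v\in X$, and no end at all, so the ``coordinate'' $g$ is not a vertex-direction and no pair of ends exists; the type of the bundle at one chosen $X_0$ simply does not determine whether the sought limit edge is of type end--end or vertex--end. Your ``minor variant'' does not catch this, because it is triggered only when $f(X_0)$ itself is a vertex bundle.

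The repair is the case split the paper actually uses, which is global rather than anchored at $X_0$: either $f(X')$ is a bundle between a vertex $v\in X'$ and a strong component for \emph{some} $X'\in\cX(D)$ --- in which case one first shows $f(\{v\})$ is already of the form $E(v,C_v)$ or $E(C_v,v)$, extracts a single vertex-direction $f'$ (the strong component side of the bundle when $v\in X$; the component $C'$ with $f(X)=C'$ or $f(X)=E(C,C')$, $v\in C$, when $v\notin X$), and obtains a vertex--end limit edge --- or this never happens for any $X$, and only then do your two coordinate vertex-directions exist on all of $\cX(D)$ and the rest of your argument goes through. A further small slip to fix in the write-up: $g(X\cup X_0)$ is not itself a strong component of $D-X$; define $g(X)$ as the strong component of $D-X$ containing it (equivalently, as $f(X)$ or the appropriate side of the bundle $f(X)$).
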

\begin{proof}
It is straightforward to show that the map given in (ii) is injective; we prove onto. 
So let $f$ be any edge-direction of $D$. First suppose that $f(X)$ is always a strong component or a bundle between strong components for every $X\in \cX(D)$.  Then $f$ defines two vertex-directions $f_1$ and $f_2$ as follows. If $f(X)=E(C_1,C_2)$ is a bundle then let $f_1(X)=C_1$ and $f_2(X)=C_2$. Otherwise, $f(X)$ is a strong component and we put $f_1(X)=f_2(X)=f(X)$. Now, the inverse of the function from Theorem~\ref{thm: directions} returns ends $\omega$ and $\eta$ for $f_1$ and $f_2$, respectively. We conclude that~$\omega\eta$ is a limit edge and that $f=f_{\omega\eta}$.  

Now, suppose that $f$ maps some finite vertex set $X'$ to a bundle between a vertex $v \in X'$ and a strong component of $D-X'$. Then also $f(\{v\})$ is a bundle between $v$ and a strong component. We consider the case where $f(\{v\})$ is of the form $E(v,C_v)$ for some strong component $C_v$ of $D-v$; the other case  is analogue.

Let us define a vertex-direction $f^\prime$ of $D$. First, for every $X \in \cX(D)$ with $v \in X$ we have that $f(X)$ is a bundle of the form $E(v,C) $ for a strong component $C$ of $D-X$ and we put $f^\prime(X) =C$. Second, if $v \notin X$ for some $X \in \cX(D)$ we have that  $f(X)$ is either a strong component $C^\prime$ of $D - X$ or a bundle $E(C,C^\prime)$ with $v \in C$. We then put $f^\prime(X)= C^\prime$. It is straightforward to check that $f^\prime$ is indeed a vertex-direction.
Finally, the inverse of the map from Theorem~\ref{thm: directions} applied to $f'$ returns an end $\omega$. By the definition of $f'$ we have that $v\omega$ is a limit edge of $D$ and a close look to the definitions involved points out that~$f=f_{v\omega}$. 
\end{proof}

\bibliographystyle{amsplain}
\bibliography{bibliography.bib}
\end{document}